\numberwithin{equation}{section}
\numberwithin{figure}{section}
\newenvironment{lyxlist}[1]
	{\begin{list}{}
		{\settowidth{\labelwidth}{#1}
		 \setlength{\leftmargin}{\labelwidth}
		 \addtolength{\leftmargin}{\labelsep}
		 }}
	{\end{list}}
\theoremstyle{plain}
\newtheorem{thm}{\protect\theoremname}[section]
\theoremstyle{plain}
\theoremstyle{plain}
\newtheorem{lem}[thm]{\protect\lemmaname}
\theoremstyle{plain}
\newtheorem{prop}[thm]{\protect\propositionname}
\newtheorem{definition}[thm]{\protect\definitionname}
\theoremstyle{plain}
\newtheorem{rem}[thm]{\protect\remarkname}
\newtheorem{question}[thm]{Question}
\theoremstyle{plain}
\providecommand{\corollaryname}{Corollary}
\providecommand{\lemmaname}{Lemma}
\providecommand{\propositionname}{Proposition}
\providecommand{\theoremname}{Theorem}
\providecommand{\definitionname}{Definition}
\providecommand{\remarkname}{Remark}
\newcommand{\ddbar}{\sqrt{-1}\partial\bar\partial}
\begin{document}
\title{Remarks on Singular K\"ahler-Einstein Metrics}
%\author{Max Hallgren and G\'abor Sz\'ekelyhidi}

\author[M. Hallgren]{Max Hallgren}
\address{Department of Mathematics, Rutgers University, Hill Center for the Mathematical Sciences 
110 Frelinghuysen Rd.
Piscataway, NJ, USA}
\email{mh1564@scarletmail.rutgers.edu}

\author[G. Sz\'ekelyhidi]{G\'abor Sz\'ekelyhidi}
\address{Department of Mathematics, Northwestern University, Evanston,
  IL, USA}
\email{gaborsz@northwestern.edu}

\maketitle

\begin{abstract}
We study two different natural notions of singular K\"ahler-Einstein metrics on normal complex varieties. In the setting of singular Ricci flat K\"ahler cone metrics that arise as non-collapsed limits of sequences of K\"ahler-Einstein metrics or K\"ahler-Ricci flows, we show that an a priori weaker notion is equivalent to the stronger one introduced by Eyssidieux-Guedj-Zeriahi, and in particular the underlying variety has log terminal singularities in this case. Our method applies to more general singular K\"ahler-Einstein spaces as well, assuming that they define RCD spaces. 
\end{abstract}

\section{Introduction}

Suppose that $X$ is a normal K\"ahler variety. There are at least two natural notions of a singular K\"ahler-Einstein metric on $X$. On the one hand, we can consider smooth K\"ahler-Einstein metrics $\omega$ on $X^{\operatorname{reg}}$, which in a neighborhood of any point of $X$ are given as $\omega = \ddbar u$ for $u\in L^\infty$. An a priori more restrictive notion was introduced by Eyssidieux-Guedj-Zeriahi~\cite{EGZ}. Their definition requires $X$ to have log terminal singularities, which can be used to define a canonical measure $d\mu$ in the neighborhood of any $x\in X$. In terms of this measure, a singular K\"ahler-Einstein metric is given locally by $\omega = \ddbar u$ with $u\in L^\infty$ satsifying the Monge-Amp\`ere equation $(\ddbar u)^n = e^{-\lambda u}d\mu$. It is not hard to see that if $X$ has log terminal singularities, then both notions of singular K\"ahler-Einstein metrics are equivalent. The motivating question of this paper is the following. 

\begin{question}
Let $X$ be a normal K\"ahler variety. 

Suppose that $\omega$ is a smooth K\"ahler-Einstein metric on the regular set $X^{\operatorname{reg}}$, such that locally on $X$ we have $\omega = \ddbar u$ for bounded $u$. Does it follow that $X$ has log terminal singularities? 
\end{question}

We will show that the answer is affirmative under some conditions, which in turn are satisfied in  natural settings arising from blowup limits of sequences of smooth K\"ahler-Einstein metrics, or K\"ahler-Ricci flows. In order to state the main results, we make the following definition.

\begin{definition} \label{rough}
Let $X$ be a normal K\"ahler variety of dimension $n$. A rough K\"ahler-Einstein variety $(X,\omega)$ consists of a smooth K\"ahler metric $\omega$ on $X^{\operatorname{reg}}$ such that the following are satisfied:
\begin{enumerate}[label=(\roman*)]
    \item \label{condition:einstein} $Rc(\omega)=\lambda \omega$ on $X^{\operatorname{reg}}$ for some $\lambda \in \mathbb{R}$,
    \item \label{condition:bddpotentials} $\omega$ has bounded local potentials, 
    \item \label{condition:domination} $\omega$ locally dominates a smooth K\"ahler metric on $X$,
    \item \label{condition:metricompletion} the metric completion $(\hat{X},d_{\hat{X}})$ of $(X^{\operatorname{reg}},\omega)$ with the trivially extended measure $\omega^n$ is an $RCD(\lambda,2n)$-space, 
    
    \item \label{condition:epsregularity} ($\epsilon$-regularity) there exists $\epsilon>0$ such that for any $x\in X$ and $r \in (0,\epsilon]$ satisfying $\mathcal{H}^{2n}(B(x,r)) \geq (\omega_{2n}-\epsilon)r^{2n}$, we have $x\in X^{\operatorname{reg}}$.
\end{enumerate}
\end{definition}

Natural examples of rough K\"ahler-Einstein varieties include Ricci-flat K\"ahler cones which are either Gromov-Hausdorff limits of a sequence of smooth K\"ahler-Einstein manifolds, or $\mathbb{F}$-limits of a sequence of smooth K\"ahler-Ricci flows. We will show this in Section 4. Note that in several other situations the conditions (iii)--(v) hold once we have (i) and (ii), such as the settings studied in \cite{Sz24, AIM, GuoSong25}. 

Our main result is the following. 

\begin{thm} \label{QCartierTheorem} If $(X,\omega)$ is a rough K\"ahler-Einstein variety, then for any $x\in X$, the analytic germ $(X,x)$ is log terminal \cite[Definition 6.2.7]{Ishii}.
\end{thm}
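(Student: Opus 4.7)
The plan is to verify log terminality at $x$ by first establishing that $K_X$ is $\mathbb{Q}$-Cartier in a neighborhood of $x$, and then checking the analytic integrability characterization of klt singularities, which is equivalent to the discrepancy definition of Ishii.

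First I would use the RCD$(\lambda,2n)$ structure (iv) together with the quantitative $\epsilon$-regularity (v) to show that the metric singular set of $\hat{X}$, and hence the analytic subvariety $X^{\mathrm{sing}}\subset X$, has real Hausdorff codimension at least $4$; since $X^{\mathrm{sing}}$ is analytic, this forces complex codimension $\geq 2$ at $x$. Next, on $X^{\mathrm{reg}}$ the Einstein equation $\operatorname{Ric}(\omega)=\lambda\omega$ combined with $\omega=\ddbar u$ gives, for any local holomorphic $n$-form $\Omega$ on a piece of $X^{\mathrm{reg}}$, the relation $\ddbar(\log|\Omega|^2_\omega+\lambda u)=0$; that is, $\log|\Omega|^2_\omega+\lambda u$ is pluriharmonic. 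Hence on any simply-connected patch of $X^{\mathrm{reg}}$, multiplying $\Omega$ by a nowhere-vanishing holomorphic factor, one can arrange $|\Omega|^2_\omega=e^{-\lambda u}$, which is bounded above and below by (ii).

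The next task is to globalize this construction on a punctured neighborhood of $x$. The obstruction is a monodromy representation $\pi_1(U\cap X^{\mathrm{reg}})\to\mathbb{C}^*$ that an appropriate power $\Omega^{\otimes m}$ kills. The uniform bound $|\Omega^{\otimes m}|^2_\omega\leq C$, combined with complex codimension $\geq 2$ of $X^{\mathrm{sing}}$ and the finite local mass of $\omega^n$ (which follows from (ii) and (iii)), then permits a Hartogs-type extension of $\Omega^{\otimes m}$ across $X^{\mathrm{sing}}$ to a section $\tilde{\Omega}$ of $mK_X$ near $x$ that does not vanish on $X^{\mathrm{reg}}$. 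This section trivializes $mK_X$ locally, so $K_X$ is $\mathbb{Q}$-Cartier at $x$. The klt condition is then immediate: on $X^{\mathrm{reg}}$ one has $|\tilde{\Omega}|^{2/m}_\omega=e^{-\lambda u}$, so the measure $(\tilde{\Omega}\wedge\overline{\tilde{\Omega}})^{1/m}$ is comparable to $e^{\lambda u}\omega^n$ and therefore locally finite near $x$. On any log resolution $\pi:Y\to X$, writing $\pi^*\tilde{\Omega}=s\cdot\sigma_Y^{\otimes m}$ with $\sigma_Y$ a local generator of $K_Y$, integrability of $|s|^{2/m}$ is exactly the statement that all discrepancies in $\pi^*K_X=K_Y-\sum a_iE_i$ satisfy $a_i>-1$.

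The main obstacle is the monodromy and extension step: the pointwise bound on $|\Omega|^2_\omega$ does not by itself force the $\mathbb{C}^*$-valued monodromy to have finite image, and the codimension-$2$ estimate alone only enables extension once a single-valued section is in hand. What must be exploited here is the combination of the RCD structure (iv) with the $\epsilon$-regularity (v) to constrain $\pi_1$ of a small punctured neighborhood of $x$ in $X^{\mathrm{reg}}$, for instance via the metric and algebraic structure of tangent cones of $\hat X$ at $x$, which are themselves rough K\"ahler--Einstein varieties with an additional scaling symmetry. Once finite monodromy is secured, the remaining steps---Hartogs extension to produce $\tilde{\Omega}$, and the discrepancy computation---are routine.
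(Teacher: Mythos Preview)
Your overall logical structure---first produce a nowhere-vanishing section of $K_X^{\otimes m}$ near $x$ to establish $\mathbb{Q}$-Gorenstein, then verify the integrability criterion for klt---matches the paper, as does the preliminary codimension-$4$ estimate for $X^{\mathrm{sing}}$. The divergence, and the gap, is entirely in how the trivializing section is obtained.

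You correctly identify the obstruction in your own argument: the flat Hermitian structure $e^{\lambda u}\omega^n$ on $K_{X^{\mathrm{reg}}}$ yields a holonomy representation $\pi_1(U\cap X^{\mathrm{reg}})\to U(1)$, and nothing you have written forces this image to be finite. A finitely generated subgroup of $U(1)$ can perfectly well be infinite (an irrational rotation suffices), so ``some power $\Omega^{\otimes m}$ kills it'' is unjustified. Your suggested repair---constraining $\pi_1$ via the RCD structure and tangent cones---is not fleshed out, and there is no general reason for the local fundamental group of a normal singularity to be finite. This is a genuine gap, not a detail.

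The paper bypasses the monodromy issue entirely. Instead of looking for a \emph{parallel} section, it constructs for some large $\ell$ a holomorphic section $u\in H^0(B\cap X^{\mathrm{reg}},K_{X^{\mathrm{reg}}}^{-\ell})$ satisfying $C^{-1}\le |u|_{h_\ell}\le C$ on a definite ball about $x$, via the Donaldson--Sun peak-section method: one builds an approximately Gaussian section using the tangent-cone geometry (this is where codimension $4$ enters), then perturbs to a genuine holomorphic section by H\"ormander's $L^2$ estimate on the Stein set $B\cap X^{\mathrm{reg}}$ (which carries a complete K\"ahler metric by Demailly). The integer $\ell$ is chosen for metric reasons (so that $\ell\omega$ is close to its tangent cone at unit scale), not to kill any holonomy. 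The two-sided bound on $|u|_{h_\ell}$ is exactly what shows $K_X^{-\ell}$ extends as a line bundle across $x$.

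The genuine technical work in the paper, which your sketch does not anticipate, is obtaining the $C^0$ and $C^1$ estimates for $L^2$ holomorphic sections of $K_{X^{\mathrm{reg}}}^{-\ell}$ \emph{up to the singular set}. Since $K_X$ is not yet known to extend, standard elliptic theory does not apply; the paper proves a mean-value inequality (via heat-kernel estimates on the RCD space and cutoff functions exploiting codimension $4$) together with an improved Kato inequality for $|\nabla^h u|$, which together give the needed uniform control. These estimates are what allow the H\"ormander correction term to be shown small in $C^0$, yielding the lower bound on $|u|_{h_\ell}$. Once this nowhere-vanishing section is in hand, the klt check via finite mass of the adapted volume form proceeds exactly as you outline.
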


\begin{rem} In particular, the analytic germ $(X,x)$ is $\mathbb{Q}$-Gorenstein \cite[Definition 6.2.1]{Ishii} so that some power of $K_X$ extends to a line bundle in a neighborhood of $x$. However, this does not imply in general that $X$ is itself $\mathbb{Q}$-Gorenstein: there exist (noncompact) normal K\"ahler varieties $X$ which are $\mathbb{Q}$-Gorenstein in a neighborhood of any point, but such that the index of $(X,x_i)$ is unbounded for some sequence $x_i \in X$.  On the other hand, if $X$ is quasiprojective, then it is $\mathbb{Q}$-Gorenstein.
\end{rem}

We are particularly interested in the case when $(X,\omega)$ is either compact or a (singular) Ricci flat K\"ahler cone. In these cases, we have the following strengthening of Theorem \ref{QCartierTheorem}.

\begin{thm} \label{KETheorem} Suppose that $(X,\omega)$ is a rough K\"ahler-Einstein variety, such that either $X$ is compact or $(X^{\operatorname{reg}},\omega)$ is a Ricci-flat cone. Then the following hold:
\begin{enumerate}[label=(\roman*)]
    \item $X$ is $\mathbb{Q}$-Gorenstein, and has log-terminal singularities. 

    \item The K\"ahler metric $\omega$ on $X^{\operatorname{reg}}$ extends to
a K\"ahler current $\omega$ on $X$ such that $(X,\omega)$ is a singular
K\"ahler-Einstein metric in the sense of \cite{EGZ}.  

    \item In the cone setting, the volume ratio of $X$ is an algebraic number, and $(X,\omega)$ is the unique Ricci-flat K\"ahler cone on $X$ with its Reeb vector field whose existence is guaranteed by \cite{collinsSasakiEinsteinMetrics2019a}.
\end{enumerate}
\end{thm}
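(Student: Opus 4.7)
The plan is to build on Theorem \ref{QCartierTheorem}, which provides log-terminality of every analytic germ $(X,x)$, and to promote both the $\mathbb{Q}$-Gorenstein statement and the Monge-Amp\`ere equation to global statements using either compactness or the cone symmetry. For part (i), each germ being log terminal implies in particular that it is $\mathbb{Q}$-Gorenstein of some local index $m_x$. In the compact case, finitely many open sets $U_\alpha$ on which $K_X^{[m_\alpha]}$ is trivialized by a nowhere-vanishing section $\Omega_\alpha$ suffice to cover $X$; setting $m$ to be a common multiple of the $m_\alpha$, the sections $\Omega_\alpha^{m/m_\alpha}$ are local trivializations of $K_X^{[m]}$ whose transitions on overlaps are non-vanishing holomorphic functions, giving $K_X^{[m]}$ the structure of a line bundle on $X$. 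In the cone case, any local trivialization of $K_X^{[m]}$ in a neighborhood of the apex may be spread over all of $X$ via the $\mathbb{R}_+$-homothety action, yielding a global line bundle $K_X^{[m]}$.

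For part (ii), once $X$ is globally $\mathbb{Q}$-Gorenstein of index $m$, the canonical measure $d\mu$ defined locally by $|\Omega|^{2/m}$ is a locally finite Borel measure on $X$ thanks to the log-terminal condition. On $X^{\operatorname{reg}}$, writing $\omega=\ddbar u$ and expanding $Rc(\omega)=-\ddbar\log(\omega^n/d\mu)-\ddbar\log d\mu$ shows that $\log(\omega^n/d\mu)+\lambda u$ is pluriharmonic, so upon absorbing this into the local potential one obtains $\omega^n=e^{-\lambda u}d\mu$ as smooth measures on $X^{\operatorname{reg}}$. To extend this identity to all of $X$, one interprets the trivial extension of $\omega^n$ as the Bedford--Taylor Monge-Amp\`ere measure of the bounded plurisubharmonic function $u$ (here condition \ref{condition:domination} of Definition \ref{rough} lets us embed a neighborhood of each $x\in X$ into Euclidean space and invoke standard pluripotential theory), so $\omega^n$ places no mass on the pluripolar set $X^{\operatorname{sing}}$, and similarly $e^{-\lambda u}d\mu$ does not charge $X^{\operatorname{sing}}$; two Borel measures agreeing on the dense open set $X^{\operatorname{reg}}$ and not charging its complement must coincide, giving the EGZ Monge-Amp\`ere equation on all of $X$.

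For part (iii), once (i) and (ii) are established in the cone setting, $(X,\omega)$ is a singular Ricci-flat K\"ahler-Einstein cone in the EGZ sense on a log-terminal variety equipped with the prescribed Reeb vector field, so the uniqueness portion of \cite{collinsSasakiEinsteinMetrics2019a} identifies $(X,\omega)$ with the unique such cone metric, and the algebraic nature of its volume ratio follows from the identification of this volume with the normalized volume of a quasi-monomial valuation in the K-stability framework of that reference. The main obstacle is the extension argument in part (ii): carefully checking that the trivially extended smooth volume form $\omega^n$ on $X^{\operatorname{reg}}$ really is the Bedford--Taylor MA measure of the bounded local potential $u$ on all of $X$. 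Everything hinges on $u$ being globally bounded together with a local embedding into a smooth ambient K\"ahler manifold (afforded by \ref{condition:bddpotentials}--\ref{condition:domination}) so that the pluripotential-theoretic zero-mass statement on the singular set is applicable and the agreement on $X^{\operatorname{reg}}$ propagates.
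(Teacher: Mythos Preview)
Your proposal is correct and follows essentially the same route as the paper. The only notable difference is in part (ii): where you argue that $\log(\omega^n/d\mu)+\lambda u$ is pluriharmonic and absorb it into the potential (or, when $\lambda=0$, into the choice of local generator for $d\mu$), the paper instead observes that $(K_{X^{\operatorname{reg}}}^{-\ell},(e^{\lambda\varphi}\omega^n)^{\otimes\ell})$ is flat with trivial holonomy for suitable $\ell$ and takes a parallel section $\sigma$, so that $\log|\sigma|_h^2$ is literally constant---this is the same content phrased slightly more geometrically; for (iii) the paper cites the appendix of \cite{donaldsonGromovHausdorffLimits2017} rather than \cite{collinsSasakiEinsteinMetrics2019a}, and for (i) you are in fact more explicit than the paper about why the local $\mathbb{Q}$-Gorenstein indices are globally bounded.
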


Using Theorem \ref{KETheorem}, we answer in the affirmative a conjecture from \cite{sunBubblingKahlerEinsteinMetrics2023} (see after Conjecture 5.9), and resolve a question from \cite[Remark 1.4]{hallgrenKahlerRicciTangentFlows2023}.

\begin{thm} \label{applicationtoRF} Suppose $(X,d)$ is a Ricci-flat metric cone arising as a noncollapsed sequence of K\"ahler-Einstein manifolds or K\"ahler-Ricci flows. Then $X$ satisfies the conclusions of Theorem \ref{KETheorem}.
\end{thm}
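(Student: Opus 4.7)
The plan is to reduce Theorem \ref{applicationtoRF} to Theorem \ref{KETheorem} by verifying that $(X,\omega)$ satisfies all five conditions of Definition \ref{rough}, and then invoking the cone case of Theorem \ref{KETheorem} directly. Since the conclusions of Theorem \ref{applicationtoRF} are exactly the conclusions of Theorem \ref{KETheorem}, the entire content of the proof is the verification of Definition \ref{rough} in both settings.

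First I would equip $X$ with an underlying normal complex-analytic (in fact affine) structure. In the K\"ahler-Einstein case, this is the conclusion of Donaldson-Sun together with the refinements of Liu-Sz\'ekelyhidi: the tangent cone is homeomorphic to a normal affine variety with a good $\mathbb{C}^*$-action generated by the Reeb field, the complex and smooth structures agree on $X^{\mathrm{reg}}$, and $\omega$ is a smooth K\"ahler form there. In the K\"ahler-Ricci flow case I would use Bamler's $\mathbb{F}$-convergence framework together with the algebraic identification of tangent flows established in recent work (e.g.\ by the first author with collaborators, building on Bamler and Chen-Wang) to obtain the same package.

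Then I would check conditions (i)--(v) one by one. For (i), Ricci-flatness on $X^{\mathrm{reg}}$ passes to the smooth limit; in the cone setting $\lambda=0$ from the scaling. For (ii), on a Ricci-flat K\"ahler cone $\omega=\tfrac{1}{2}\ddbar r^{2}$ on $X^{\mathrm{reg}}$, and $r^{2}$ is locally bounded, so $\omega$ has bounded local potentials. For (iii), the affine embedding $X\hookrightarrow \mathbb{C}^{N}$ provided by the algebraic structure gives a smooth K\"ahler form in a neighborhood of any point (the pullback of the Euclidean form); that $\omega$ dominates it follows from the comparison of the bounded plurisubharmonic potential of $\omega$ with the ambient metric, using the known regularity of the embedding and polynomial volume growth. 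For (iv), noncollapsed Gromov-Hausdorff limits of K\"ahler-Einstein manifolds carry an $RCD(\lambda,2n)$ structure (Cheeger-Colding, Ambrosio-Gigli-Savar\'e, De Philippis-Gigli), and the same holds for noncollapsed Ricci flow $\mathbb{F}$-limits after Bamler, in combination with the stability results of Mondello-Naber-Semola and the first author. For (v), $\varepsilon$-regularity of the relevant kind is classical Cheeger-Colding/Anderson in the Einstein setting, and due to Bamler in the Ricci flow setting.

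The hardest step is certainly the K\"ahler-Ricci flow case: one must combine Bamler's metric-measure flow theory with the recent algebraic identification of tangent flows to confirm that the complex-analytic structure of $X$ is compatible with the metric completion appearing in Definition \ref{rough}, and in particular to verify (iii) and (v) in this setting. The K\"ahler-Einstein case is essentially a bookkeeping exercise assembling known results. Once all five conditions are verified, Theorem \ref{KETheorem} applies to $(X,\omega)$ in the cone case and delivers the three assertions of Theorem \ref{applicationtoRF} at once.
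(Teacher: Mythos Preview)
Your overall strategy matches the paper exactly: verify the five conditions of Definition~\ref{rough} and then apply Theorem~\ref{KETheorem}. The treatment of conditions \ref{condition:einstein}, \ref{condition:bddpotentials}, and the K\"ahler--Einstein half of \ref{condition:metricompletion} and \ref{condition:epsregularity} is essentially what the paper does. For \ref{condition:domination} the paper's argument is slightly sharper than yours: the embedding $F:X\hookrightarrow\mathbb{C}^N$ is by functions that are locally \emph{Lipschitz} with respect to $\omega$, so $\operatorname{tr}_{\omega}(F^{\ast}\omega_{\mathbb{C}^N})=|dF|^2$ is locally bounded, which directly gives $\omega\geq c\,F^{\ast}\omega_{\mathbb{C}^N}$.

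The genuine gap is your handling of \ref{condition:metricompletion} in the Ricci flow case. You write that the RCD property ``holds for noncollapsed Ricci flow $\mathbb{F}$-limits after Bamler,'' but this is not available in the literature and is in fact one of the main new technical results of the paper (Proposition~\ref{coneisRCD}). Bamler's structure theory does not establish that an $\mathbb{F}$-limit cone is an RCD space. The paper proves it by a two-step argument you do not mention: first, Lemma~\ref{LipHarmonic} shows that any $W^{1,2}_{\mathrm{loc}}$ function on the cone that is harmonic on the regular part and homogeneous is locally Lipschitz, via a heat-kernel representation $u(x)=\int_X u(y)K(x,y,1)\,dg(y)$ combined with the Gaussian gradient bounds on $K$; second, Proposition~\ref{coneisRCD} reduces the RCD$(0,2n)$ property of $C(Z)$ to RCD$(2n-1,2n)$ for the link $Z$ via Ketterer's theorem, and then verifies the latter using Honda's spectral characterization, which requires precisely that Laplace eigenfunctions on $Z$ are Lipschitz --- which follows from Lemma~\ref{LipHarmonic} by separating variables. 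You should not cite this as known; it needs to be proved. Your identification of \ref{condition:domination} and \ref{condition:epsregularity} as the hard steps in case $(B)$ is also off: \ref{condition:epsregularity} follows fairly directly from Bamler's $\epsilon$-regularity once volume monotonicity on the RCD cone gives a lower bound on the entropy of any tangent cone, whereas \ref{condition:metricompletion} is where the real work lies.
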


\begin{rem} In particular, Theorem \ref{applicationtoRF} applies to tangent cones of any noncollapsed limit of K\"ahler-Einstein manifolds or K\"ahler-Ricci flows. 
\end{rem}

The proof of Theorem \ref{QCartierTheorem} relies on the construction of sections of multiples of $K_X$, which are bounded from below and above
in a neighborhood of any given point $x_{0}\in X$. We use the method of Donaldson-Sun~\cite{donaldsonGromovHausdorffLimitsKahler2014}, exploiting that the tangent cones of non-collapsed RCD spaces are metric cones (see Cheeger-Colding~\cite{cheegerStructureSpacesRicci1997} and De Philippis-Gigli~\cite{noncollapsedRCD}). The main new difficulty is that since initially $K_X$ is not assumed to define a $\mathbb{Q}$-line bundle on $X$, applying the H\"ormander $L^2$ method will only lead to a section on $X^{\operatorname{reg}}$. We then need to obtain a priori $C^0$ and $C^1$ estimates for holomorphic sections of $K_{X^{reg}}^\ell$ near singular points of $X$. 

In Section 2, we use
improved Kato inequalities and estimates derived from the RCD assumption to establish such estimates. 

In Section 3, we use the method of Donaldson-Sun to obtain
peaked almost-holomorphic sections of $L^{m}$ near any given point
$x_{0}$ when $m\gg 0$ for suitable line bundles $L$. Using our assumption that $\omega$ is smooth outside the analytic subset $X\setminus X^{\operatorname{reg}}$, we perturb these almost-holomorphic sections to holomorphic sections. These sections are shown to have approximately Gaussian norm near $x_0$ using the estimates from Section 2. Given this, we complete the proofs of Theorem \ref{QCartierTheorem} and Theorem \ref{KETheorem}. 

In Section 4, we prove Theorem \ref{applicationtoRF} by showing that conical limits of (possibly non-polarized) K\"ahler-Einstein manifolds or Ricci flows satisfy the assumptions of Theorem \ref{KETheorem}. In particular, we show that Ricci-flat cones arising as limits of Ricci flows satisfy an RCD property. 

\subsection*{Acknowledgements}
The authors thank Jian Song, Chenyang Xu, and Junsheng Zhang for helpful comments and discussions. M.H. was supported in part by NSF grant DMS-2202980 and G. Sz. was supported in part by NSF grant DMS-2203218. 

We are grateful to Song Sun, Jikang Wang, and Junsheng Zhang for sharing their interesting preprint \cite{SunWangZhang}, where they give an independent proof of some of our results by different methods.

\section{Elliptic estimates}
We assume throughout this section that $X$ is a rough K\"ahler-Einstein variety in the sense of Definition \ref{rough}. Our goal in this section is to derive $C^0$ and $C^1$ estimates for sections of $L^m$ for certain line bundles on $X^{\operatorname{reg}}$, including $L=K_{X^{\operatorname{reg}}}$. We begin with an improved $\epsilon$-regularity property which is an elementary consequence of Definition \ref{rough}. Recall that the $\epsilon$-regular set $\mathcal{R}_{\epsilon}(Y)$ of a $2n$-dimensional noncollapsed RCD space $Y$ is the set of $p\in Y$ satisfying
$$\lim_{r\to 0} \frac{\mathcal{H}^{2n}(B(p,r))}{r^{2n}}>\omega_{2n}-\epsilon,$$
where $\omega_{2n}$ is the volume of the Euclidean unit ball. 

\begin{lem} \label{improvedepsreg} Suppose $(X,\omega)$ is a rough K\"ahler-Einstein variety, with $Rc(\omega)=\lambda \omega$ for some $|\lambda|\leq 1$, and let $\epsilon>0$ be as in Definition \ref{rough}\ref{condition:epsregularity}. Then there exists $\epsilon '=\epsilon'(\epsilon,n,\lambda)>0$ such that the following hold: 
\begin{enumerate}[label=(\roman*)]
\item For any $x\in X$ and $r\in (0,\epsilon']$ with 
$$\mathcal{H}^{2n}(B(x,r))\geq (\omega_{2n}-2\epsilon')r^{2n}$$
we have $B(x,\epsilon'r)\subset \subset X^{\operatorname{reg}}$ and
$$\sup_{B(x,\epsilon'r)} |Rm|\leq \frac{1}{(\epsilon'r)^2}.$$

\item Given any sequence $x_i\in \hat{X}$ and $r_i \in (0,1]$ such that $(\hat{X},r_i^{-1}d_{\hat{X}},x_i)$ converges in the pointed Gromov-Hausdorff sense to a noncollapsed $RCD(\lambda,2n)$ space $(Y,d_Y,x_{\infty})$, the convergence is smooth on $\mathcal{R}_{\epsilon'}(Y)$ in the following sense. $\mathcal{R}_{\epsilon'}(Y)$ is an open subset of $Y$ with the structure of a smooth K\"ahler manifold $(J_Y,g_Y)$, and there is a precompact open exhaustion $(U_i)$ of $\mathcal{R}_{\epsilon'}(Y)$ along with diffeomorphisms $\psi_i :U_i \to V_i \subseteq X^{\operatorname{reg}}$ such that $\psi_i$ converge locally uniformly to the identity map on $\mathcal{R}_{\epsilon'}(Y)$ with respect to the Gromov-Hausdorff convergence, and $$\psi_i^{\ast}J\to J_Y, \qquad \psi_i^{\ast} (r_i^{-2}g_i)\to g_Y$$
in $C_{\operatorname{loc}}^{\infty}(\mathcal{R}_{\epsilon'}(Y))$, where $J$ is the complex structure on $X^{\operatorname{reg}}$. 
\end{enumerate}
\end{lem}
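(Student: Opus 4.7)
The plan is to deduce both parts from Definition \ref{rough}\ref{condition:epsregularity}, Bishop--Gromov volume comparison on noncollapsed $RCD$ spaces, Anderson's $\epsilon$-regularity for smooth Einstein manifolds, and Cheeger--Gromov compactness. The overall strategy is first to propagate the almost-Euclidean volume lower bound from one scale to all smaller scales in a nearby ball, use the given $\epsilon$-regularity hypothesis to conclude that ball lies in $X^{\operatorname{reg}}$, and then invoke standard smooth Einstein theory on that ball.

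For (i), suppose $\mathcal{H}^{2n}(B(x,r)) \geq (\omega_{2n}-2\epsilon')r^{2n}$ with $r\leq\epsilon'$ and $|\lambda|\leq 1$. Bishop--Gromov comparison on the noncollapsed $RCD(\lambda,2n)$ space $(\hat{X},d_{\hat{X}})$ yields a modulus $\eta(\epsilon',n,\lambda)\to 0$ as $\epsilon'\to 0$ such that
$$\mathcal{H}^{2n}(B(y,s)) \geq (\omega_{2n}-\eta)s^{2n}$$
for every $y\in B(x,r/2)$ and $s\in(0,r/2]$. Choosing $\epsilon'$ small so that $\eta\leq\epsilon$ and $r/2\leq\epsilon$, Definition \ref{rough}\ref{condition:epsregularity} forces $y\in X^{\operatorname{reg}}$, so $B(x,r/2)\subset X^{\operatorname{reg}}$. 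On this open set $\omega$ is smooth and satisfies $Rc(\omega)=\lambda\omega$, so Anderson's $\epsilon$-regularity theorem for Einstein manifolds, applied at scale $r/2$ with the almost-Euclidean volume estimate just obtained, yields a uniform bound $\sup_{B(x,\epsilon'r)}|Rm|\leq(\epsilon'r)^{-2}$ after shrinking $\epsilon'$ once more.

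For (ii), fix $y\in \mathcal{R}_{\epsilon'}(Y)$, so that some $s_0>0$ satisfies $\mathcal{H}^{2n}(B_Y(y,s_0))>(\omega_{2n}-\epsilon')s_0^{2n}$. Noncollapsed $RCD$ volume continuity of De Philippis--Gigli \cite{noncollapsedRCD} applied to $(\hat{X},r_i^{-1}d_{\hat{X}},x_i)\to(Y,d_Y,x_\infty)$ gives, for any $y_i\to y$,
$$\mathcal{H}^{2n}_{r_i^{-1}d_{\hat{X}}}(B(y_i,s_0)) \geq (\omega_{2n}-2\epsilon')s_0^{2n}$$
for large $i$. Part (i) applied to the rescaled rough K\"ahler-Einstein variety $(X,r_i^{-2}\omega)$ (whose Ricci constant $r_i^2\lambda$ still satisfies $|r_i^2\lambda|\leq 1$) then gives a neighborhood of $y_i$ inside $X^{\operatorname{reg}}$ with uniform rescaled curvature bound. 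The volume lower bound combined with this curvature bound gives a uniform injectivity radius lower bound, and the Einstein equation bootstraps to uniform $C^\infty$ bounds for the rescaled metrics. Cheeger--Gromov--Hamilton compactness, applied on a precompact exhaustion $(U_i)$ of $\mathcal{R}_{\epsilon'}(Y)$, then produces diffeomorphisms $\psi_i:U_i\to V_i\subset X^{\operatorname{reg}}$ along which $\psi_i^*(r_i^{-2}g)$ and $\psi_i^*J$ converge in $C^\infty_{\operatorname{loc}}$ to the unique smooth K\"ahler limit structure $(g_Y,J_Y)$; uniqueness of smooth limits under Gromov--Hausdorff convergence identifies this limit with the K\"ahler structure that $\mathcal{R}_{\epsilon'}(Y)$ inherits from its noncollapsed $RCD$ structure, and a diagonal argument packages the $\psi_i$ into the claimed exhaustion.

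The main obstacle is purely technical: one must ensure Bishop--Gromov and the Anderson $\epsilon$-regularity theorem are applied cleanly across the interface between the RCD space $\hat{X}$ and the smooth Einstein locus $X^{\operatorname{reg}}$. This is handled above, because once the almost-maximal volume hypothesis is propagated to a fixed-size ball and that ball is pushed into $X^{\operatorname{reg}}$, we are genuinely on a smooth Einstein manifold; the density of $X^{\operatorname{reg}}$ in $\hat{X}$ and the local domination of $\omega$ by a smooth K\"ahler metric (Definition \ref{rough}\ref{condition:domination}) guarantee that intrinsic and extrinsic distances agree on such balls, so no subtlety arises in invoking the standard smooth machinery.
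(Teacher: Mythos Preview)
Your argument is correct and follows essentially the same route as the paper: propagate the almost-Euclidean volume bound to nearby points via Bishop--Gromov on the RCD space, invoke condition \ref{condition:epsregularity} to land in $X^{\operatorname{reg}}$, apply Anderson's $\epsilon$-regularity on the resulting smooth Einstein ball, and then for (ii) use volume convergence (the paper cites Colding, you cite De~Philippis--Gigli, same content) together with Cheeger--Gromov compactness and a patching/diagonal argument.

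One small correction to your closing paragraph: condition \ref{condition:domination} says $\omega$ dominates a smooth K\"ahler metric from \emph{below}, not above, so it does not by itself control the intrinsic/extrinsic distance discrepancy you raise. Fortunately no such control is needed: $d_{\hat X}$ is by definition the metric completion of the Riemannian distance on $(X^{\operatorname{reg}},\omega)$, so once a $d_{\hat X}$-ball lies entirely in $X^{\operatorname{reg}}$ it coincides with the corresponding Riemannian geodesic ball, and Anderson's theorem applies directly. The paper does not comment on this point at all, for the same reason.
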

\begin{proof} (i) If $\epsilon '\in (0,\frac{1}{4}\epsilon)$, then for any $x\in X$ and $r\in (0,\epsilon']$ with $\mathcal{H}^{2n}(B(x,r))\geq (\omega_{2n}-2\epsilon')r^{2n}$, relative volume comparison gives
$$\mathcal{H}^{2n}(B(y,r))\geq (\omega_{2n}-\epsilon)r^{2n}$$
for all $y\in B(x,2c(\epsilon)r)$. By Definition \ref{rough} \ref{condition:epsregularity}, it follows that $B(x,2c(\epsilon)r) \subset X^{\operatorname{reg}}$, so using Definition \ref{rough} \ref{condition:einstein}, the claim follows from Anderson's $\epsilon$-regularity \cite[Theorem 3.2]{andersoneps}.

(ii) Given $y \in \mathcal{R}_{\epsilon'}(Y)$, there exists $r=r(y)\in (0,\epsilon']$ such that $\mathcal{H}^{2n}(B(y,r))>(\omega_{2n}-\epsilon')r^{2n}$. Given any sequence $y_i \in X$ converging to $y$ with respect to the Gromov-Hausdorff convergence $(\hat{X},r_i^{-1}d_{\hat{X}},x_i)\to (Y,d_Y,x_{\infty})$, Colding's volume convergence theorem gives $$\mathcal{H}^{2n}(B(y_i,r))>(\omega_{2n}-\epsilon
)r^{2n}$$
for sufficiently large $i\in \mathbb{N}$. By (i) and the Cheeger-Gromov compactness theorem, it follows that some neighborhood $B_y$ of $y$ is isometric to a smooth K\"ahler manifold $(J_y,g_y)$, and that we can pass to a subsequence to obtain diffeomorphisms $\psi_{i,y}:B_y \to X^{\operatorname{reg}}$ converging uniformly to the identity map of $B_y$ (with respect to the pointed Gromov-Hausdorff convergence), such that $(\psi_{i,y}^{\ast}J,\psi_{i,y}^{\ast}(r_i^{-2}g))\to (J_y,g_y)$ in $C_{\operatorname{loc}}^{\infty}(B_y)$. A standard construction then allows us to patch together these diffeomorphisms $\psi_{i,y}$ to obtain the desired global diffeomorphisms $\psi_i$. 
\end{proof}

In order to show that the singular set $\hat{X} \setminus X^{\operatorname{reg}}$ has singularities of codimension $>1$, we need to adapt a cutoff function construction \cite[Lemma 3.7]{songcurrents} of Sturm to the local setting.

\begin{lem} \label{sturmcutofflemma}
For any $x_{0}\in\hat{X}$, there exists $r\in(0,1]$ and $C<\infty$
such that the following holds. For any compact subset $\mathcal{K}\subseteq X^{\operatorname{reg}}$,
there exists $\rho\in C_{c}^{\infty}(X^{\operatorname{reg}},[0,1])$
such that $\rho|_{B(x_{0},r)\cap\mathcal{K}}\equiv1$, $\mbox{supp}(\rho)\subseteq B(x_0,2r)$, and $\int_{B(x_{0},r)\cap X^{\operatorname{reg}}}|\nabla\rho|^{2}\omega^{n}\leq C$. 
\end{lem}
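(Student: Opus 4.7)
The plan is to adapt Sturm's construction from \cite[Lemma 3.7]{songcurrents} to the local setting of $X$, using logarithmic cutoffs of a holomorphic defining function of the singular set. Working near $x_{0}\in\hat{X}$ in a fixed analytic chart in which $X$ is realized as a normal analytic subvariety of an open set in $\mathbb{C}^{N}$, I would choose $r\in(0,1]$ with $\overline{B(x_{0},4r)}$ contained in this chart. Since $X$ is normal, $X^{\operatorname{sing}}=X\setminus X^{\operatorname{reg}}$ has complex codimension at least two in $X$, so there exist holomorphic functions $f_{1},\dots,f_{k}$ on an ambient Euclidean neighborhood whose common zero set on $X$ equals $X^{\operatorname{sing}}$. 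Rescaling, I may assume $F:=\sum_{i}|f_{i}|^{2}\le 1$, so that $\log F\le 0$ is plurisubharmonic on $X$ and smooth on $X^{\operatorname{reg}}$.

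Fix a smooth spatial cutoff $\eta\in C_{c}^{\infty}(B(x_{0},2r))$ with $\eta\equiv 1$ on $B(x_{0},r)$. By condition \ref{condition:domination}, $\omega\ge c\omega_{X}$ for some smooth background Kähler form $\omega_{X}$ on the chart, which yields $|\nabla\eta|_{\omega}^{2}\le c^{-1}|\nabla\eta|_{\omega_{X}}^{2}\le C$. For each $\epsilon\in(0,1/2)$, let $\chi_{\epsilon}\in C^{\infty}(\mathbb{R},[0,1])$ satisfy $\chi_{\epsilon}(t)=0$ for $t\le 2\log\epsilon$, $\chi_{\epsilon}(t)=1$ for $t\ge\log\epsilon$, and $|\chi_{\epsilon}'|\le C/|\log\epsilon|$, and set
\[
\rho_{\epsilon}:=\eta\cdot\chi_{\epsilon}(\log F)\in C_{c}^{\infty}(X^{\operatorname{reg}},[0,1]).
\]
Then $\operatorname{supp}(\rho_{\epsilon})\subseteq B(x_{0},2r)\cap\{F\ge\epsilon^{2}\}$, which lies in $X^{\operatorname{reg}}$ since $X^{\operatorname{sing}}\subseteq\{F=0\}$, and $\rho_{\epsilon}\equiv 1$ on $B(x_{0},r)\cap\{F\ge\epsilon\}$. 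Since $F$ is bounded below on any compact $\mathcal{K}\subset X^{\operatorname{reg}}$, taking $\epsilon=\epsilon(\mathcal{K})$ sufficiently small ensures $\rho_{\epsilon}\equiv 1$ on $B(x_{0},r)\cap\mathcal{K}$.

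Splitting $|\nabla\rho_{\epsilon}|_{\omega}^{2}\le 2|\nabla\eta|_{\omega}^{2}+2\eta^{2}|\chi_{\epsilon}'(\log F)|^{2}|\nabla\log F|_{\omega}^{2}$, the first term integrates to a uniform bound since $\omega^{n}$ has finite mass on $B(x_{0},2r)$ by Bedford-Taylor theory applied to condition \ref{condition:bddpotentials}. The second term is supported in the annulus $A_{\epsilon}:=\{\epsilon^{2}\le F\le\epsilon\}$ and dominated by
\[
\frac{C}{|\log\epsilon|^{2}}\int_{A_{\epsilon}\cap B(x_{0},2r)}\sqrt{-1}\,\partial\log F\wedge\bar{\partial}\log F\wedge\omega^{n-1}.
\]
The main step is thus to bound this integral by $O(|\log\epsilon|^{2})$.

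For this key estimate I would use the truncated plurisubharmonic function $L_{\delta}:=\max(\log F,\log\delta)$ with $\delta=\epsilon^{2}$, which satisfies $\|L_{\delta}\|_{L^{\infty}}\le 2|\log\epsilon|$ and $\sqrt{-1}\,\partial L_{\delta}\wedge\bar{\partial}L_{\delta}=\sqrt{-1}\,\partial\log F\wedge\bar{\partial}\log F$ on $\{F>\delta\}\supseteq A_{\epsilon}$. Using the identity
\[
2\sqrt{-1}\,\partial L_{\delta}\wedge\bar{\partial}L_{\delta}=\sqrt{-1}\,\partial\bar{\partial}(L_{\delta}^{2})-2L_{\delta}\cdot\sqrt{-1}\,\partial\bar{\partial}L_{\delta},
\]
integrating by parts against a further spatial cutoff $\zeta\in C_{c}^{\infty}(B(x_{0},3r))$ with $\zeta\equiv 1$ on $B(x_{0},2r)$, and applying the Chern-Levine-Nirenberg inequality for products of bounded plurisubharmonic functions with the closed positive $(n-1,n-1)$-current $\omega^{n-1}$ (well-defined via Bedford-Taylor thanks to condition \ref{condition:bddpotentials}), gives
\[
\int_{\{\zeta=1\}}\sqrt{-1}\,\partial L_{\delta}\wedge\bar{\partial}L_{\delta}\wedge\omega^{n-1}\le C\|L_{\delta}\|_{L^{\infty}}^{2}\le C'(\log\epsilon)^{2},
\]
which closes the argument. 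The main obstacle is precisely this Chern-Levine-Nirenberg step: since $\omega^{n}$ is not pointwise dominated by a Euclidean volume form, one cannot reduce to the naive Euclidean codimension-$2$ computation, and instead must systematically exploit the Bedford-Taylor calculus of products of bounded plurisubharmonic functions against $\omega^{n-1}$ on the singular variety $X$.
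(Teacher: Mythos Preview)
Your proposal is correct and follows essentially the same Sturm-type logarithmic-cutoff strategy as the paper. The differences are minor: you work with the single plurisubharmonic function $\log F=\log\sum_i|f_i|^2$ and one cutoff $\chi_\epsilon(\log F)$, whereas the paper truncates each $\log|f_i|^2$ separately and combines via $\rho=\sigma(\sum_i\rho_{i,\epsilon})$; and you package the key integration by parts as a Chern--Levine--Nirenberg inequality for the bounded psh function $L_\delta$ against the closed positive current $\omega^{n-1}$, while the paper carries out the same estimate explicitly.

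One imprecision worth flagging concerns your spatial cutoff $\eta$. You justify $|\nabla\eta|_\omega\le C$ via condition~\ref{condition:domination}, which is correct for a cutoff built in the ambient Euclidean chart; but an ambient cutoff need not have support contained in the \emph{metric} ball $B(x_0,2r)$, since only $\omega\ge c\,\omega_X$ is assumed, not the reverse inequality. The paper handles this by taking $\phi$ to be the Mondino--Naber cutoff coming from the RCD structure (condition~\ref{condition:metricompletion}), which is supported in $B(x_0,2r)$ with $r|\nabla\phi|+r^2|\Delta\phi|\le C(n,\lambda)$. Using that cutoff in place of your $\eta$ repairs the argument; note that your CLN formulation only needs the gradient bound on the cutoff, whereas the paper's explicit integration by parts also uses the Laplacian bound, so in that respect your route is marginally more economical.
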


\begin{proof} By Definition \ref{rough}\ref{condition:metricompletion} and \cite{MondinoNaber}[Lemma 3.1], there exists a cutoff function $\phi$ on $\widehat{X}$ with $\phi|_{B(x_{0},r)}\equiv1$,
$\operatorname{supp}(\phi)\subset\subset B(x_{0},2r)$, and $r|\nabla\phi|+r^{2}|\Delta\phi|\leq C(n,\lambda)$ on $X^{\operatorname{reg}}$. By Definition \ref{rough}\ref{condition:domination}, we can choose $r>0$ sufficiently small so that 
\[
B(x_{0},2r)\setminus X^{\operatorname{reg}}=\{x\in B(x_{0},2r);f_{1}(x)=\cdots=f_{N}(x)=0\}
\]
for some $f_{1},...,f_{N}\in\mathcal{O}_{X}(B(x,2r))$. Let $F\in C^{\infty}([0,\infty),[0,1])$
be a smooth cutoff function satisfying $F|_{[0,\frac{1}{2}]}\equiv1$
and $F|_{[1,\infty)}\equiv0$. Define

\[
\eta_{i,\epsilon}:=\max\left(\log|f_{i}|^{2},\log\epsilon\right),
\]
\[
\rho_{i,\epsilon}:=\phi\cdot F\left(\frac{\eta_{i,\epsilon}}{\log\epsilon}\right),
\]
so that $\log\epsilon\leq\eta_{i,\epsilon}\leq0$, $\sqrt{-1}\partial\overline{\partial}\eta_{i,\epsilon}\geq0$ in the sense of currents, $\rho_{i,\epsilon}|_{B(x_{0},r)\cap\{|f_{i}|\geq\epsilon^{\frac{1}{4}}\}}\equiv1$, 
and $\operatorname{supp}(\rho_{i,\epsilon})\subseteq B(x_{0},2r)\cap\{|f_{i}|\geq\epsilon^{\frac{1}{2}}\}$.
By rescaling $f_{1},...,f_{N}$, we may also assume $\eta_{i,\epsilon}\leq0$
for all $\epsilon\in(0,1]$. We integrate by parts to estimate

\begin{align*}
\int_{X^{\operatorname{reg}}}\phi^{2}\sqrt{-1}\partial\eta_{i,\epsilon}\wedge\overline{\partial}\eta_{i,\epsilon}\wedge\omega^{n-1}= & \int_{X^{\operatorname{reg}}}\phi^{2}(-\eta_{i,\epsilon})\sqrt{-1}\partial\overline{\partial}\eta_{i,\epsilon}\wedge\omega^{n-1}-2\text{Re}\int_{X^{\operatorname{reg}}}\sqrt{-1}\eta_{i,\epsilon}\partial\phi\wedge\phi\overline{\partial}\eta_{i,\epsilon}\wedge\omega^{n-1}\\
\leq & \int_{X^{\operatorname{reg}}}(-\eta_{i,\epsilon})\phi^{2}\sqrt{-1}\partial\overline{\partial}\eta_{i,\epsilon}\wedge\omega^{n-1}+\frac{1}{2}\int_{X^{\operatorname{reg}}}\phi^{2}\sqrt{-1}\partial\eta_{i,\epsilon}\wedge\overline{\partial}\eta_{i,\epsilon}\wedge\omega^{n-1}\\
 & +2\int_{X^{\operatorname{reg}}}\eta_{i,\epsilon}^{2}\sqrt{-1}\partial\phi\wedge\overline{\partial}\phi\wedge\omega^{n-1},
\end{align*}
so that
\begin{align*}
\int_{X^{\operatorname{reg}}}\sqrt{-1}\partial\rho_{i,\epsilon}\wedge\overline{\partial}\rho_{i,\epsilon}\wedge\omega^{n-1} \hspace{-30 mm}\\ = & \int_{X^{\operatorname{reg}}}\sqrt{-1}\left(F\left(\frac{\eta_{i,\epsilon}}{\log\epsilon}\right)\partial\phi+ \frac{1}{\log \epsilon}F'\left(\frac{\eta_{i,\epsilon}}{\log\epsilon}\right)\phi\partial\eta_{i,\epsilon}\right) \\
&\qquad \wedge\left(F\left(\frac{\eta_{i,\epsilon}}{\log\epsilon}\right)\overline{\partial}\phi+\frac{1}{\log\epsilon}F'\left(\frac{\eta_{i,\epsilon}}{\log\epsilon}\right)\phi\overline{\partial}\eta_{i,\epsilon}\right)\wedge\omega^{n-1}\\
\leq & 2\int_{X^{\operatorname{reg}}}\sqrt{-1}\partial\phi\wedge\overline{\partial}\phi\wedge\omega^{n-1}+\frac{C}{|\log\epsilon|^{2}}\int_{X^{\operatorname{reg}}}\phi^{2}\sqrt{-1}\partial\eta_{i,\epsilon}\wedge\overline{\partial}\eta_{i,\epsilon}\wedge\omega^{n-1}\\
\leq & C(n,\lambda)\int_{B(x_{0},2r)\cap X^{\operatorname{reg}}}\omega^{n}+\frac{C}{|\log\epsilon|^{2}}\int_{X^{\operatorname{reg}}}(-\eta_{i,\epsilon})\phi^{2}\sqrt{-1}\partial\overline{\partial}\eta_{i,\epsilon}\wedge\omega^{n-1}\\
\leq & C(n,\lambda)+\frac{C}{|\log\epsilon|}\int_{X^{\operatorname{reg}}}\phi^{2}\sqrt{-1}\partial\overline{\partial}\eta_{i,\epsilon}\wedge\omega^{n-1}\\
\leq & C(n,\lambda)+\frac{C}{|\log\epsilon|}\int_{X^{\operatorname{reg}}}\eta_{i,\epsilon} (|\nabla \phi|^2 +\phi |\Delta \phi|)\omega^n\\
\leq & C(n,\lambda).
\end{align*}
Choose $\sigma\in C^{\infty}([0,\infty),[0,1])$ such
that $\sigma|_{[0,\frac{1}{4}]}\equiv0$ and $\sigma|_{[\frac{3}{4},\infty)}\equiv1$,
choose $\epsilon>0$ such that $B(x_{0},r)\cap\mathcal{K}\subseteq\bigcup_{i=1}^{N}\{|f_{i}|\geq\epsilon^{\frac{1}{4}}\},$
and set
\[
\rho:=\sigma\left(\sum_{i=1}^{N}\rho_{i,\epsilon}\right).
\]
Then $\int_{B(x_0,r)\cap X^{\operatorname{reg}}}|\nabla\rho|^{2}\omega^{n}\leq C(n,\lambda)$,
\[
\operatorname{supp}(\rho)\subseteq B(x_{0},2r)\cap\bigcup_{i=1}^{N}\{|f_{i}|\geq\epsilon^{\frac{1}{2}}\}\subseteq X^{\operatorname{reg}},
\]
and $\rho=1$ on 
\[ \{ \rho = 1\}\supseteq 
\bigcup_{i=1}^{N}\{\rho_{i,\epsilon}=1\}\supseteq B(x_{0},r)\cap\bigcup_{i=1}^{N}\{|f_{i}|\geq\epsilon^{\frac{1}{4}}\}\supseteq B(x_{0},r)\cap\mathcal{K}.
\]
\end{proof}

The following is an essential ingredient for establishing the $C^0$ and $C^1$ estimates. 
\begin{prop} \label{codim4}
    The singular sets of $\hat{X}$ and its tangent cones have singularities of Hausdorff codimension at least 4. In particular,
    $\hat{X}\setminus X^{\operatorname{reg}}$ has Hausdorff codimension at least 4. 
\end{prop}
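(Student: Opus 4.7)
The plan is to combine codimension-two regularity for noncollapsed $\mathrm{RCD}$ spaces with the K\"ahler structure inherited by tangent cones from Lemma~\ref{improvedepsreg}(ii), and then upgrade the bound to codimension four using the Ricci-flat K\"ahler structure on tangent cones.

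By Definition~\ref{rough}\ref{condition:metricompletion}, $(\hat X,d_{\hat X})$ is a noncollapsed $\mathrm{RCD}(\lambda,2n)$ space. By the work of De~Philippis--Gigli, every tangent cone is then a metric cone over a $(2n-1)$-dimensional noncollapsed $\mathrm{RCD}$ space, and the Mondino--Naber stratification gives $\dim_{\mathcal H}\mathcal S^k\le k$ for the stratum where no tangent cone splits off $\mathbb R^{k+1}$. This already yields codimension at least two for the singular set. Since the same $\mathrm{RCD}$ theory applies to tangent cones (which are themselves noncollapsed $\mathrm{RCD}(0,2n)$ spaces), by Cheeger--Colding dimension reduction it suffices to show that no iterated tangent cone of $\hat X$ is of the form $Y=\mathbb R^{2n-2}\times C(S^1_\alpha)$ with $\alpha\in(0,1)$.

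Suppose such a $Y$ arises. Lemma~\ref{improvedepsreg}(ii) equips $\mathcal R_{\epsilon'}(Y)$ with a smooth K\"ahler structure $(J_Y,g_Y)$ obtained as the smooth pointed limit of rescalings of $(J,\omega)|_{X^{\operatorname{reg}}}$, and since the $\mathrm{RCD}$ curvature bound rescales to zero at infinitesimal scales, $(J_Y,g_Y)$ is Ricci-flat. I would next argue that the metric product splitting of $Y$ is necessarily holomorphic: the translation group of $\mathbb R^{2n-2}$ acts by isometries of $(J_Y,g_Y)$, and by uniqueness of the smooth K\"ahler structure produced by Lemma~\ref{improvedepsreg}(ii) together with rigidity of integrable complex structures under continuous symmetry, $J_Y$ must preserve the splitting. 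Consequently $C(S^1_\alpha)\setminus\{0\}$ inherits a flat K\"ahler structure with cone angle $2\pi\alpha$, and its K\"ahler connection has nontrivial monodromy $e^{-2\pi i(1-\alpha)}$ around the apex.

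The contradiction would come from producing a nonvanishing parallel $(n,0)$-form on a punctured neighborhood of the apex in $Y$. The K\"ahler-Einstein equation on $X^{\operatorname{reg}}$ together with Definition~\ref{rough}\ref{condition:bddpotentials} produces nonvanishing local holomorphic volume forms, and under the $\psi_i$ of Lemma~\ref{improvedepsreg}(ii) these yield nonvanishing parallel $(n,0)$-forms on simply connected pieces of $\mathcal R_{\epsilon'}(Y)$. Extending such a form around a loop encircling the apex would then contradict the nontrivial monodromy computed above, forcing $\alpha=1$; any globalization step requiring $L^2$ Hodge theory would invoke the cutoff of Lemma~\ref{sturmcutofflemma}. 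The ``in particular'' clause follows since any RCD-regular point of $\hat X$ has tangent cone $\mathbb R^{2n}$ and hence almost-Euclidean small balls, which by Lemma~\ref{improvedepsreg}(i) and Definition~\ref{rough}\ref{condition:epsregularity} must lie in $X^{\operatorname{reg}}$, giving $\hat X\setminus X^{\operatorname{reg}}\subseteq\mathcal S$.

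The main obstacle is establishing the holomorphic character of the metric splitting of $Y$, i.e.\ that the Euclidean translation isometries preserve $J_Y$. This is the delicate step where the smooth convergence from Lemma~\ref{improvedepsreg}(ii) must be exploited carefully, and where most of the actual technical work of the proof will lie.
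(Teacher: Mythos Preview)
Your proposal has two genuine gaps, and the second is fatal to the approach you outline.

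\textbf{No-boundary step.} The claim that De~Philippis--Gigli and Mondino--Naber ``already yield codimension at least two'' is false for general noncollapsed $\mathrm{RCD}(\lambda,2n)$ spaces: such spaces may have boundary, i.e.\ tangent cones of the form $\mathbb{R}^{2n-1}\times[0,\infty)$, so the stratification by itself only gives $\dim_{\mathcal H}\mathcal S\le 2n-1$. The paper earns codimension two by a separate argument: it uses the log-type cutoffs of Lemma~\ref{sturmcutofflemma} (which exploit that $X\setminus X^{\operatorname{reg}}$ is analytic) to build a $W^{1,3/2}$ function blowing up exactly on $\hat X\setminus X^{\operatorname{reg}}$, deduces $\mathcal H^{2n-5/4}(\hat X\setminus X^{\operatorname{reg}})=0$ via a Poincar\'e--capacity argument, and then invokes Bru\`e--Naber--Semola boundary regularity to rule out half-space tangent cones. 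You invoke Lemma~\ref{sturmcutofflemma} only vaguely, for a different purpose, and never address the half-space case.

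\textbf{Monodromy step.} More seriously, your argument for excluding $Y=\mathbb{R}^{2n-2}\times C(S^1_\alpha)$ is circular. You propose to force $\alpha=1$ by producing a global parallel $(n,0)$-form on $Y^{\operatorname{reg}}$, hence trivial holonomy of $K_Y$ around the apex. But the $\psi_i$ of Lemma~\ref{improvedepsreg}(ii) are diffeomorphisms $U_i\to V_i\subset X^{\operatorname{reg}}$, so a loop $\gamma$ encircling the apex in $Y^{\operatorname{reg}}$ is carried to a loop $\psi_i(\gamma)$ that does \emph{not} bound in $V_i$. You therefore cannot compute the holonomy of $K_{X^{\operatorname{reg}}}$ around $\psi_i(\gamma)$ as a curvature integral over a shrinking disk, and there is no mechanism for the local parallel sections on simply connected pieces to patch across $\gamma$. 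Controlling that holonomy is essentially equivalent to knowing that $K_X$ is locally $\mathbb{Q}$-Cartier---exactly the content of Theorem~\ref{QCartierTheorem}, which this proposition is meant to \emph{precede}. The paper sidesteps this entirely: following \cite[Proposition~27]{Sz24} it works with the \emph{trivial} line bundle $(\mathcal O_X,e^{-k\varphi})$ and holomorphic \emph{functions}. These extend across $X\setminus X^{\operatorname{reg}}$ by normality, are then harmonic on $\hat X$, hence locally Lipschitz by \cite{jiangharmonic}; this supplies the $C^0$ and $C^1$ estimates needed to run the Chen--Donaldson--Sun coordinate argument forcing $\gamma=2\pi$, without ever touching $K_X$. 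You also omit the separate step (via \cite[Proposition~28]{Sz24}) ruling out tangent cones that split exactly $\mathbb{R}^{2n-3}$, which is needed to go from codimension three to four.

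Incidentally, the obstacle you flag---holomorphicity of the metric splitting of $Y$---is actually the routine part: on a smooth Ricci-flat K\"ahler manifold a parallel vector field $V$ has $JV$ parallel as well, and since $C(S^1_\alpha)$ with $\alpha<1$ admits no nonzero parallel fields, the $\mathbb{R}^{2n-2}$ factor is automatically $J$-invariant. The step you treat as a formality (globalizing the volume form) is where your argument breaks.
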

\begin{proof}
    Given $x_0 \in X$, choose $r>0$ such that Lemma \ref{sturmcutofflemma} applies. Choose a precompact exhaustion $(\mathcal{K}_j)$ of $\hat{X}\setminus X^{\operatorname{reg}}$, so that Lemma \ref{sturmcutofflemma} gives $\phi_j \in C_c^{\infty}(X^{\operatorname{reg}},[0,1])$ satisfying $\rho_j|_{B(x_0,r)\cap \mathcal{K}_j}\equiv 1$, and $\int_{B(x_0,r)\cap X^{\operatorname{reg}}}|\nabla \rho_j|^{2}dg \leq C(n,\lambda)$. Then H\"older's inequality gives
    $$\limsup_{j\to \infty} \int_{B(x_0,r)\cap X^{\operatorname{reg}}}|\nabla \rho_j|^{\frac{3}{2}}\omega^n\leq \limsup_{j\to \infty}\left( \int_{B(x_0,r)\cap X^{\operatorname{reg}}}|\nabla \rho_j|^{2}\omega^n \right)^{\frac{3}{4}} \left( \int_{B(x_0,r)\setminus \mathcal{K}_j} \omega^n\right)^{\frac{1}{4}}=0.$$
We may therefore pass to a subsequence to ensure that 
$$\int_{B(x_0,r)\cap X^{\operatorname{reg}}} (|\nabla \rho_j|^{\frac{3}{2}} + (1-\rho_j)^{\frac{3}{2}}) \omega^n \leq 2^{-j}.$$
Set $\psi:= \sum_{j=1}^{\infty} (1-\rho_j) \in W^{1,\frac{3}{2}}(B(x_0,r))$, so that $\psi<\infty$ on $X^{\operatorname{reg}}$, whereas 
\begin{equation} \label{eq:usedincodim2} \lim_{x\to B(x_0,r)\setminus X^{\operatorname{reg}}}\psi(x)=\infty.\end{equation} 
Because $\hat{X}$ is an $RCD(\lambda,2n)$ space, it satisfies a Poincar\'e inequality by \cite[Theorem 1]{poincare} and \cite[p.970]{lipdensity}, so we can argue as in \cite[claim in proof of Theorem 4.17]{EvansGariepy} to conclude from \eqref{eq:usedincodim2} that for any $x\in B(x_0,r)\setminus X^{\operatorname{reg}}$, we have 
\begin{equation} \label{eq:sdensity} \limsup_{s\searrow 0} \frac{1}{s^{2n-\frac{5}{4}}}\int_{B(x,s)}|\nabla \psi|^{\frac{3}{2}} \omega^n=\infty.\end{equation}
Because $\hat{X}$ satisfies the volume doubling property, \cite[proof of Theorem 2.10]{EvansGariepy}, \eqref{eq:sdensity}, and $\psi \in W^{1,\frac{3}{2}}(B(x_0,r))$ yield $\mathcal{H}^{2n-\frac{5}{4}}(\hat{X}\setminus X^{\operatorname{reg}})=0$. By applying Bru\`e-Naber-Semola~\cite[Theorem 1.2]{BrueNaberSemola} as in \cite[Proposition 10]{Sz24}, it follows that $\hat{X}$ can not admit any iterated tangent cones of the form $\mathbb{R}^{2n-1}\times [0,\infty)$, and in particular, the Hausdorff co-dimension of $\hat{X}\setminus X^{\operatorname{reg}}$ is at least 2.

    We can also rule out (iterated) tangent cones of the form $\mathbb{R}^{2n-2}\times C$ and $\mathbb{R}^{2n-3}\times C$ for cones $C$ without further lines splitting. This can be done by following the arguments in \cite[Propositions 27, 28]{Sz24}, as we now explain. Suppose some iterated tangent cone of $\hat{X}$ at $x$ is of the form $\mathbb{R}^{2n-2}\times C(\mathbb{S}_{\gamma}^1)$. This means there are $x_j \in \widehat{X}$ with $x_j \to x$, and $k_j \in \mathbb{N}$ with $\lim_{j\to \infty}k_j = \infty$ such that $(\hat{X},k_j^{\frac{1}{2}} d_{\hat{X}},x_j)$ converge in the pointed Gromov-Hausdorff sense to $\mathbb{R}^{2n-2}\times C(\mathbb{S}_{\gamma}^1)$ as $j\to \infty$. Using Definition \ref{rough}\ref{condition:bddpotentials}, we let $U$ be a Stein neighborhood of $x$ in $X$ on which $\omega=\ddbar \varphi$, so that $(L,h_j):=(\mathcal{O}_{X^{\operatorname{reg}}},e^{-k_j \varphi})$ is a polarization of $(X^{\operatorname{reg}}\cap U,\omega_j)$, where $\omega_j:=k_j \omega$. By Definition \ref{rough}\ref{condition:domination}, any sufficiently small ball in $(\hat{X}, d_{\hat{X}})$ is contained in such a Stein neighborhood $U$. We can thus argue as in \cite{Sz24}[Proposition 27], using the arguments of \cite[Proposition 9 and Section 2.5]{CDSII} as well as \cite[Theorem 0.2]{demaillyEstimationsL2Pour1982}, in order to guarantee that $\gamma=2\pi$ assuming we can prove the required $C^0$ and $C^1$ estimates for $L^2$ holomorphic functions on $X^{\operatorname{reg}}$. Any holomorphic function $u$ on $U\cap X^{\operatorname{reg}}$ extends to a holomorphic (and thus locally bounded) function on $U$ since $X$ is normal. Because $u$ is harmonic on $U$ in the sense of distributions (c.f. \cite[Lemma 11]{Sz24}), we can apply \cite[Theorem 1.1]{jiangharmonic} to conclude that $u$ is locally Lipschitz on $U$. On $U\cap X^{\operatorname{reg}}$, we have $\Delta_{\omega_j}|u|_{h_j} \geq -C|u|_{h_j}$ and $\Delta_{\omega_j} |\nabla^{h_j} u|_{h_j} \geq -C|\nabla^{h_j} u|_{h_j}$, where $C>0$ are independent of $j\in \mathbb{N}$. Because we have already shown these quantities are both locally bounded, the desired estimates follow (c.f. \cite[Proposition 19]{Sz24}). Thus any (iterated) tangent cone of the form $\mathbb{R}^{2n-2}\times C$ is actually $\mathbb{R}^{2n}$. The argument of \cite[Proposition 28]{Sz24} then shows that any (iterated) tangent cone of the form $\mathbb{R}^{2n-3}$ is actually $\mathbb{R}^{2n}$. By the definition of a rough K\"ahler-Einstein variety any point in $\hat{X}\setminus X^{\operatorname{reg}}$ is in the metric singular set of $\hat{X}$, so the Hausdorff dimension bound for $\hat{X}\setminus X^{\operatorname{reg}}$ follows from the Hausdorff dimension bounds of De Philippis-Gigli~\cite[Theorem 1.8]{noncollapsedRCD}.
\end{proof}

We now construct the cutoff functions that we will use to prove elliptic estimates on $X^{\operatorname{reg}}$. First recall from Mondino-Naber~\cite[Lemma 3.1]{MondinoNaber} that for any $r$-ball $B(x,r)\subset \hat{X}$, with $r \in (0,10)$, we have a Lipschitz function $\phi_r$ that satisfies $\phi_r = 1$ on $B(x,r)$, $\mathrm{supp}(\phi_r)\subset B(x,2r)$ and 
\begin{equation} \label{phicutoff} r^2 |\Delta \phi_r| + r |\nabla \phi_r| < C, \end{equation}
for a constant $C$ depending on $n, \lambda$.  

Using these cutoff functions, together with the fact that $\hat{X}\setminus X^{\operatorname{reg}}$ is a closed subset with codimension at least 4, we can argue similarly to Donaldson-Sun~\cite[Proposition 3.5]{donaldsonGromovHausdorffLimitsKahler2014} to construct cutoff functions $\eta_\epsilon$ as follows.
\begin{lem} \label{cutofflemma}

    There exist functions $\eta_\epsilon \in C_c^{\infty}(X^{\operatorname{reg}})$ such that for any compact subset $\mathcal{K}\subset X^{\operatorname{reg}}$ we have $\eta_\epsilon|_{\mathcal{K}} = 1$ for sufficiently small $\epsilon$. In addition for any $R, \sigma > 0$ we can arrange that 
\begin{equation} \label{etacutoff}
\lim_{\epsilon\searrow0}\int_{B(y_0,R)\cap X^{\operatorname{reg}}}(|\nabla\eta_{\epsilon}|^{4-\sigma}+|\Delta\eta_{\epsilon}|^{2-\sigma})\omega^n=0,
\end{equation}
for a basepoint $y_0$. 

\end{lem}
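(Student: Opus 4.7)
The plan is to adapt the cutoff construction of Donaldson--Sun~\cite[Proposition 3.5]{donaldsonGromovHausdorffLimitsKahler2014} to the RCD setting by combining the Mondino--Naber cutoffs \eqref{phicutoff} with Vitali covers of the singular set at Hausdorff-size scale. By Proposition~\ref{codim4}, the singular set $S := \hat X \setminus X^{\operatorname{reg}}$ has Hausdorff dimension at most $2n-4$, so $\mathcal{H}^{2n-4+\sigma/2}(S\cap \overline{B}(y_0,R+1))=0$ for the given $\sigma$. Hence for every $\epsilon \in (0,1)$ I can choose a countable cover of $S\cap \overline{B}(y_0,R+1)$ by balls $B(x_i^{\epsilon},r_i^{\epsilon})$ with $x_i^{\epsilon}\in S$, $r_i^{\epsilon}\le \epsilon$, and
$$\sum_i (r_i^{\epsilon})^{2n-4+\sigma/2}\le \epsilon.$$
By a standard Vitali-type reduction using the volume doubling of $\hat X$, I may additionally assume that the enlarged balls $\{B(x_i^{\epsilon},2r_i^{\epsilon})\}$ have overlap multiplicity bounded by a constant $N=N(n,\lambda)$.

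Let $\phi_i$ be the Mondino--Naber cutoff for $B(x_i^{\epsilon},r_i^{\epsilon})$, and let $\Phi$ be such a cutoff for $B(y_0,R+1)\subset B(y_0,R+2)$. I define
$$\tilde\eta_\epsilon := \Phi \cdot \prod_i (1-\phi_i).$$
Since the balls $\{B(x_i^{\epsilon},r_i^{\epsilon})\}$ cover $S\cap \overline{B}(y_0,R+1)$, the support of $\tilde\eta_\epsilon$ is a compact subset of $X^{\operatorname{reg}}$, and on $B(y_0,R)$ we have $\Phi\equiv 1$, so the $\Phi$-factor contributes nothing to the integrals below. Moreover $\tilde\eta_\epsilon|_{\mathcal K}\equiv 1$ for any compact $\mathcal{K}\subset X^{\operatorname{reg}}\cap B(y_0,R)$ once $\epsilon<\frac{1}{2}\mathrm{dist}(\mathcal{K},S)$.

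To estimate the integrals I use the bounded overlap to obtain the pointwise bounds
$$|\nabla \tilde\eta_\epsilon|\le C\sum_i |\nabla\phi_i|,\qquad |\Delta \tilde\eta_\epsilon|\le C\sum_i |\Delta\phi_i| + C\sum_i |\nabla\phi_i|^2 \quad \text{on }B(y_0,R),$$
where the cross-terms $\nabla\phi_i\cdot\nabla\phi_j$ have been absorbed because at most $N$ of the $\phi_i$ are non-constant at any point. Applying Jensen's inequality termwise (again valid because only $N$ summands are nonzero at each point), combined with $r_i|\nabla\phi_i|+r_i^2|\Delta\phi_i|\le C$ and volume doubling $\mathcal{H}^{2n}(B(x_i^{\epsilon},2r_i^{\epsilon}))\le C(r_i^{\epsilon})^{2n}$, yields
\begin{align*}
\int_{B(y_0,R)\cap X^{\operatorname{reg}}} |\nabla\tilde\eta_\epsilon|^{4-\sigma}\omega^n &\le C\sum_i (r_i^{\epsilon})^{-(4-\sigma)}\mathcal{H}^{2n}(B(x_i^{\epsilon},2r_i^{\epsilon}))\le C\sum_i (r_i^{\epsilon})^{2n-4+\sigma},\\
\int_{B(y_0,R)\cap X^{\operatorname{reg}}} |\Delta\tilde\eta_\epsilon|^{2-\sigma}\omega^n &\le C\sum_i (r_i^{\epsilon})^{-2(2-\sigma)}\mathcal{H}^{2n}(B(x_i^{\epsilon},2r_i^{\epsilon}))\le C\sum_i (r_i^{\epsilon})^{2n-4+2\sigma}.
\end{align*}
Since $r_i^{\epsilon}\le 1$, both upper bounds are dominated by $\sum_i (r_i^{\epsilon})^{2n-4+\sigma/2}\le \epsilon$, which tends to $0$ as $\epsilon\searrow 0$.

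Finally, $\tilde\eta_\epsilon$ is only Lipschitz, so I would smooth it by convolution in local coordinate charts on its compact support inside $X^{\operatorname{reg}}$ at a scale $\delta_\epsilon\ll \min_i r_i^{\epsilon}$; standard estimates show the smoothing changes each of the two integrals by a factor of $1+o(1)$ and preserves the property $\eta_\epsilon\equiv 1$ on any prescribed compact $\mathcal{K}\subset X^{\operatorname{reg}}\cap B(y_0,R)$ for small $\epsilon$. The main technical obstacle is coordinating two requirements simultaneously, namely that the double balls $\{B(x_i^{\epsilon},2r_i^{\epsilon})\}$ have bounded overlap and that the power-sum $\sum (r_i^{\epsilon})^{2n-4+\sigma/2}$ be arbitrarily small; this is what forces the use of the sharp codimension $4$ estimate from Proposition~\ref{codim4} rather than a weaker codimension $2$ estimate.
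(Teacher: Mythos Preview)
There is a genuine gap at the step where you assert that ``by a standard Vitali-type reduction using the volume doubling of $\hat X$'' you may assume the doubled balls $B(x_i^{\epsilon},2r_i^{\epsilon})$ have overlap multiplicity bounded by some $N=N(n,\lambda)$. This is false in general when the radii vary over many scales, even in Euclidean space: for instance, in $\mathbb{R}^2$ the balls $B\bigl((-1)^k 2^{-k},\,0.55\cdot 2^{-k}\bigr)$ are pairwise disjoint, yet every one of the doubled balls contains the origin. Vitali only gives you disjointness of the cores $B(x_i,r_i/10)$; volume doubling then bounds the overlap of the $2r_i$-balls \emph{within a fixed dyadic scale}, but not across scales. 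Since an efficient Hausdorff cover of a set of positive codimension necessarily uses balls of many different scales, you cannot coordinate bounded overlap with the small power-sum $\sum_i (r_i^{\epsilon})^{2n-4+\sigma/2}\le\epsilon$ by a Vitali argument alone. Your closing remark correctly flags this coordination as the crux, but the difficulty is not resolved by the codimension-4 input; it is an issue with the covering geometry itself.

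The paper's proof handles exactly this point. It also starts from a cover with disjoint $r_i/10$-balls and small power-sum, but then sets $f=\sum_i \phi_{r_i}$ and $\eta=\Phi\circ f$, and estimates $|\nabla f|,|\Delta f|$ pointwise via a dyadic decomposition $I_\alpha=\{i:2^{-\alpha-1}\le r_i<2^{-\alpha}\}$. For a point $x$, one looks at the \emph{smallest} ball $B_j$ (say $j\in I_\alpha$) containing $x$; doubling gives at most $N$ balls from each larger scale $I_\beta$, $\beta\le\alpha$, meeting $B_j$, and their gradient contributions sum as a geometric series $\sum_{\beta\le\alpha}N\cdot 2^{\beta}\le C\,2^{\alpha}\le C r_j^{-1}$. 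Integrating over the ``smallest-ball'' decomposition $B_j'$ then recovers $\sum_j r_j^{2n-4+2\sigma}$. Your product construction $\prod_i(1-\phi_i)$ would need the same dyadic/geometric-series argument to be made rigorous.
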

\begin{proof}

Using that $\Sigma = \hat{X}\setminus X^{reg}$ is closed and has Hausdorff codimension at least four, it follows that for any $\delta > 0$ we can find a cover of $\Sigma\cap B(y_0, \delta^{-1})$ with finitely many balls $B(x_i, r_i/2)$ such that
\[ \sum_i r_i^{2n-4+2\sigma} < \delta.\]
By the Vitali covering lemma we can assume that the balls $B(x_i, r_i/10)$ are disjoint. We define the function $f = \sum_i f_i$, where $f_i = \phi_{r_i}$ for the cutoff functions $\phi_{r_i}$ as above, supported on $B(x_i, 2r_i)$. Let $\Phi(t)$ be a smooth function such that $\Phi(0)=0$, $\Phi(t)=1$ for $t > 9/10$, and $|\Phi'(t)|, |\Phi''(t)|\leq 10$ for all $t$. Then define $\eta(x) = \Phi(f(x))$. We have
\[ \begin{aligned} |\nabla\eta(x)| &\leq 10 |\nabla f(x)|, \\
|\Delta\eta(x)| &\leq 10 |\nabla f(x)|^2 + 10 |\Delta f(x)|. \end{aligned} \]
Therefore it is enough to estimate the integral of $|\nabla f|^{4-\sigma} + |\Delta f|^{2-\sigma}$. 

Let us decompose the index set of the balls into the subsets
\[ I_\alpha = \{i\,:\, 2^{-\alpha-1} \leq r_i < 2^{-\alpha}\},\]
for integers $\alpha \geq 0$. 
By the volume doubling property of $\hat{X}$, there exists $N=N(n,\lambda)$ 
such that if $j\in I_\alpha$, then for any fixed $\beta \leq \alpha$ there are at most $N$ balls $B_i$ with $i\in I_\beta$ intersecting $B_j$. Consider a ball $B_j$, with $j\in I_\alpha$. Let us denote by $B_j' \subset B_j$ the set $x\in B_j$ such that for all $i\in I_{\beta}$ with $\beta>\alpha$, we have $x\notin B_i$. 
If $x\in B_j'$, then for each $\beta \leq \alpha$ there are at most $N$ balls $B_i$ with $i\in I_\beta$ and $x\in B_i$. It follows from this that for any $x\in B_j'$, we have
$$|\nabla f(x)|\leq C(n,\lambda)\sum_{\beta=0}^{\alpha} \sum_{i\in I_{\beta}} r_i^{-1}\leq C(n,\lambda)r_j^{-1}2^{-\alpha} \sum_{\beta=0}^{\alpha} N2^{\beta} \leq C(n,\lambda)r_j^{-1},$$
and similarly $|\Delta f(x)| \leq C(n,\lambda)r_j^{-2}.$
We therefore have
\[ \int_{B_j'} (|\nabla f|^{4-\sigma} + |\Delta f|^{2-\sigma}) \,\omega^n \leq C(n,\lambda)r_j^{2n} r_j^{2\sigma-4}.\]
Given any $x\in B(y_0,\delta^{-1})$, there is a unique $\alpha \in \mathbb{N}$ such that $x\in B_i'$ for some $i\in I_{\alpha}$ (and there are at most $N$ distinct $i\in I_{\alpha}$ satisfying $x\in B_i'$). Summing over $j$, it follows that
\[ \int_{B(y_0, \delta^{-1})\cap X^{\operatorname{reg}}} (|\nabla f|^{4-\sigma} + |\Delta f|^{2-\sigma}) \,\omega^n \leq C(n,\lambda)\sum_j r_j^{2n-4+2\sigma} < C(n,\lambda)\delta.\]
Moreover, because $\sup_i r_i \leq \delta^{\frac{1}{2n-4+2\sigma}}$, we have $\mbox{supp}(\eta) \subseteq B(\hat{X}\setminus X^{\operatorname{reg}},\delta^{\frac{1}{2n-4+2\sigma}})$. We may therefore choose $\eta_\epsilon$ to be defined by $1-\eta$, for $\delta = C(n,\lambda)^{-1}\epsilon$. 
\end{proof}

Because $\hat{X}$ is an RCD space, it has a well-defined heat kernel, whose properties we now recall. 
\begin{lem}
\label{heatkernel} There exists a function $K:\hat{X}\times \hat{X}\times(0,\infty)\to(0,\infty)$ such that for any compact subset $\mathcal{K} \subseteq \hat{X}$, there exists $C=C(\mathcal{K})$ such that the following hold:

$(i)$ $K$ is continuous, and $K|_{X^{\operatorname{reg}}\times X^{\operatorname{reg}}\times(0,\infty)}$
is smooth,

$(ii)$ $(\partial_{t}-\Delta_{x})K(x,y,t)=0$ for all $x,y\in X^{\operatorname{reg}}$
and $t>0$,

$(iii)$ For any Lipschitz $\psi\in C_{c}(X^{\operatorname{reg}})$, $\lim_{t\searrow0}\int_{\hat{X}}K(x,y,t)\psi(y)dg(y)=\psi(x)$
for all $x\in X^{\operatorname{reg}}$,

$(iv)$ $K(x,y,t)=K(y,x,t)$ for all $x,y\in X^{\operatorname{reg}}$ and
$t>0$,

$(v)$ $K(x,y,t)\leq\frac{C}{t^{n}}\exp\left(-\frac{d^{2}(x,y)}{Ct}\right)$
for all $x,y\in \mathcal{K}$ and $t\in (0,1]$,

$(vi)$ $|\nabla_{x} K(x,y,t)|=|\nabla_{y} K(x,y,t)|\leq\frac{C}{t^{\frac{n+1}{2}}} \exp\left(-\frac{d^{2}(x,y)}{Ct}\right)$
for all $x,y\in X^{\operatorname{reg}}\cap \mathcal{K}$ and $t\in (0,1]$. 
\end{lem}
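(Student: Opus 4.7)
The plan is to invoke the general heat-kernel theory for $RCD(\lambda,2n)$ spaces to produce $K$ on all of $\hat{X}$, and then to upgrade its regularity on $X^{\mathrm{reg}}\times X^{\mathrm{reg}}\times(0,\infty)$ using interior parabolic estimates for the smooth Einstein metric.

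By Definition \ref{rough}\ref{condition:metricompletion}, $\hat{X}$ with measure $\omega^{n}$ is a non-collapsed $RCD(\lambda,2n)$ space, hence satisfies local volume doubling and a local $(1,2)$-Poincar\'e inequality (Rajala). The Cheeger energy is a strongly local, quasi-regular Dirichlet form whose heat semigroup $P_{t}$ is Markovian, self-adjoint and conservative. By the general theory of Sturm on doubling Dirichlet spaces satisfying a Poincar\'e inequality, $P_{t}$ admits a jointly continuous, symmetric, positive integral kernel $K$ on $\hat{X}\times\hat{X}\times(0,\infty)$, establishing continuity in $(i)$, together with $(iii)$ (since $P_{t}\psi\to\psi$ locally uniformly for continuous $\psi$) and $(iv)$. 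The same framework provides the Gaussian upper bound
$$K(x,y,t)\leq\frac{C}{V(x,\sqrt{t})}\exp\Bigl(-\frac{d^{2}(x,y)}{Ct}\Bigr);$$
using non-collapsing $V(x,\sqrt{t})\geq c(\mathcal{K})t^{n}$ on a compact $\mathcal{K}$ (from Bishop--Gromov together with a lower volume bound at a fixed scale) yields $(v)$. For $(vi)$, I would invoke the Gaussian gradient estimate for heat kernels on $RCD(K,N)$-spaces due to Jiang, and refined by Jiang--Li--Zhang, giving $|\nabla_{x}K(x,y,t)|\leq Ct^{-1/2}K(x,y,C't)$, which combined with $(v)$ produces $(vi)$ after absorbing the change in the exponential constant.

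It remains to prove the smoothness assertions in $(i)$ and the pointwise heat equation $(ii)$. Since $\omega$ is smooth and Einstein on $X^{\mathrm{reg}}$, the Cheeger--Laplacian restricted to $C_{c}^{\infty}(X^{\mathrm{reg}})$ coincides with the Riemannian Laplacian $\Delta_{\omega}$. Hence for fixed $y\in X^{\mathrm{reg}}$, $x\mapsto K(x,y,t)$ is a locally bounded distributional solution of $(\partial_{t}-\Delta_{\omega,x})K=0$ on $X^{\mathrm{reg}}\times(0,\infty)$. The Einstein condition $Rc(\omega)=\lambda\omega$, together with Lemma \ref{improvedepsreg}, gives uniform control on the curvature of $\omega$ on relatively compact subsets of $X^{\mathrm{reg}}$, so interior parabolic Schauder estimates promote $K$ to $C_{\mathrm{loc}}^{\infty}(X^{\mathrm{reg}}\times X^{\mathrm{reg}}\times(0,\infty))$ (using symmetry to bootstrap in $y$ as well), yielding $(ii)$.

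The main obstacle is the identification of the abstract Cheeger--Laplacian with the classical Laplacian on $X^{\mathrm{reg}}$. Here one uses that every $u\in C_{c}^{\infty}(X^{\mathrm{reg}})$ lies in the domain of the Cheeger energy and satisfies $\Delta_{\mathrm{Ch}}u=\Delta_{\omega}u$, and that the cutoffs of Lemma \ref{cutofflemma}, combined with the codimension $\geq4$ bound from Proposition \ref{codim4}, make $C_{c}^{\infty}(X^{\mathrm{reg}})$ dense enough in the relevant Sobolev sense to legitimately test the defining identity for $K$ on the smooth part without boundary contributions from the singular set. A secondary point is that the Gaussian bounds in the RCD literature are stated in terms of $V(x,\sqrt{t})$; passing to the simpler $t^{-n}$ form of $(v)$--$(vi)$ on a compact set is exactly where the dependence of $C$ on $\mathcal{K}$ enters.
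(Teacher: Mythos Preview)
Your approach is essentially the same as the paper's: invoke the general RCD heat-kernel theory (Sturm; Jiang--Li--Zhang for the gradient bound, which is exactly the reference \cite{jiangheat} the paper cites for (v),(vi)), and use relative volume comparison to pass from $V(x,\sqrt{t})^{-1}$ to $t^{-n}$ on compacta. The paper's proof is terser, but the content is the same.

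One comment on what you call the ``main obstacle'': you do not actually need Lemma~\ref{cutofflemma} or Proposition~\ref{codim4} to identify $\Delta_{\mathrm{Ch}}$ with $\Delta_{\omega}$ on $X^{\mathrm{reg}}$. Strong locality of the Cheeger energy already gives this: for $u$ in the domain of $\Delta_{\mathrm{Ch}}$ and $v\in C_c^{\infty}(X^{\mathrm{reg}})$ (which is automatically in the form domain since it is Lipschitz), one has $\int(\Delta_{\mathrm{Ch}}u)v=-\mathcal{E}(u,v)=-\int\langle\nabla u,\nabla v\rangle_{\omega}=\int(\Delta_{\omega}u)v$, so $\Delta_{\mathrm{Ch}}u=\Delta_{\omega}u$ as distributions on $X^{\mathrm{reg}}$. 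Parabolic regularity then gives smoothness without any control on the singular set. (This is presumably what the paper means by ``(i),(ii) are justified by the fact that the Laplacian is strongly local''.) Invoking the codimension bound here is harmless but unnecessary, and would in any case be logically fine since those results precede this lemma.

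Also, you do not need Lemma~\ref{improvedepsreg} for the interior parabolic estimates: local smoothness of $K$ on $X^{\mathrm{reg}}\times X^{\mathrm{reg}}\times(0,\infty)$ only requires that $\omega$ is a smooth metric there, not uniform curvature control.
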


\begin{proof} Assertions (i),(ii) are justified by the fact that that the Laplacian is strongly local, while (iii) follows from the fact that $\lim_{t\searrow 0}\int_{\hat{X}} K(x,y,t)\psi(y)dg(y)=\psi (x)$ a.e. for $\psi \in L^2(\hat{X})$, and (iv) follows from the fact that $\Delta$ is a self-adjoint densely-defined operator on $L^2(\hat{X})$. The estimates (v),(vi) follow from \cite[Theorem 1.2, Corollary 1.2]{jiangheat} and relative volume comparison. 
\end{proof}
We can use the heat kernel estimates of Lemma \ref{heatkernel} to
prove the following $C^{0}$ estimate for subsolutions of an elliptic
equation. The proof is similar to \cite[Lemma 4]{liuGromovHausdorffLimitsAhler2019}, except that we now require that $p$ is strictly larger than $2$ to make up for the lack of sharp estimates on the size of the singular set. 
\begin{lem}
\label{elliptic} Given $p>2$ and any precompact open set $B\subseteq \hat{X}$, there exists $C=C(\lambda,p,B)<\infty$ such that the following holds. Let $x_0 \in \hat{X}$ and $r\in (0,1]$ be such that $B(x_0,5r) \subseteq B$, and suppose $v:B(x_0,5r)\cap X^{\operatorname{reg}}\to [0,\infty)$ is Lipschitz on compact subsets of $B(x_0,5r)\cap X^{\operatorname{reg}}$. If $\Delta v \geq -Av$ in the sense of distributions on $B(x_0,5r)\cap X^{\operatorname{reg}}$, and if $\int_{B(x_0,5r)\cap X^{\operatorname{reg}}}v^p \omega^n<\infty$, then
$$\sup_{B(x_0,r)\cap X^{\operatorname{reg}}}|v|\leq Ce^{Ar^2} \left( \frac{1}{r^{2n}} \int_{X^{\operatorname{reg}}\cap B(x_0,5r)} |v|^p \omega^n \right)^{\frac{1}{p}}.$$
\end{lem}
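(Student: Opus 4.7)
The plan is to adapt the heat-kernel based $L^p$-to-$L^\infty$ argument of \cite[Lemma 4]{liuGromovHausdorffLimitsAhler2019} to the rough K\"ahler-Einstein setting, using the singular-set cutoffs of Lemma \ref{cutofflemma} to compensate for the fact that $v$ is a priori defined only on $X^{\operatorname{reg}}$. Because we only need a pointwise bound at each fixed $x\in B(x_0,r)\cap X^{\operatorname{reg}}$, it suffices to pass to the limit in the cutoff parameter pointwise in $x$, without requiring uniform estimates.

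First I would introduce two cutoffs. Let $\phi$ be a Mondino-Naber spatial cutoff satisfying \eqref{phicutoff}, with $\phi\equiv 1$ on $B(x_0,2r)$ and $\operatorname{supp}(\phi)\subset B(x_0,3r)$. Let $\eta_\epsilon$ come from Lemma \ref{cutofflemma}, and set $\psi_\epsilon:=\phi\eta_\epsilon$, $\tilde{v}_\epsilon:=\psi_\epsilon v$, which is Lipschitz and compactly supported in $X^{\operatorname{reg}}$. Its heat extension
\[
u(y,t):=\int_{\hat{X}}K(y,z,t)\,\tilde{v}_\epsilon(z)\,\omega^n(z)
\]
satisfies $\partial_t u=\Delta u$ with initial data $\tilde{v}_\epsilon$. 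Fix $x\in B(x_0,r)\cap X^{\operatorname{reg}}$, denote by $d_x>0$ its distance to $\hat{X}\setminus X^{\operatorname{reg}}$, and set $f(t):=e^{At}u(x,t)$. Using self-adjointness of $\Delta$, the product rule $\Delta(\psi_\epsilon v)=\psi_\epsilon\Delta v+v\Delta\psi_\epsilon+2\langle\nabla v,\nabla\psi_\epsilon\rangle$, the distributional inequality $\Delta v\geq -Av$, and an integration by parts in $y$ to replace the $\nabla v$ term, one finds
\[
f'(t)\geq -e^{At}\int_{\hat{X}}v(y)\bigl[K(x,y,t)\Delta\psi_\epsilon(y)+2\langle\nabla_y K(x,y,t),\nabla\psi_\epsilon(y)\rangle\bigr]\omega^n(y).
\]
Integrating from $0$ to $T=r^2$ bounds $\tilde{v}_\epsilon(x)=f(0)$ by $e^{AT}u(x,T)$ plus cutoff error terms.

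Letting $\epsilon\to 0$, the errors split into $\phi$- and $\eta_\epsilon$-derivative parts. The $\phi$-derivative parts are supported on the annulus $B(x_0,3r)\setminus B(x_0,2r)$, at distance $\geq r$ from $x$, and Gaussian decay of $K,\nabla K$ (Lemma \ref{heatkernel}(v),(vi)) together with $|\nabla\phi|\leq Cr^{-1}$, $|\Delta\phi|\leq Cr^{-2}$ from \eqref{phicutoff} yield a bound of order $Cr^{-2n/p}\|v\|_{L^p(B(x_0,5r)\cap X^{\operatorname{reg}})}$. The $\eta_\epsilon$-derivative parts sit in an $\epsilon$-shrinking neighborhood of $\hat{X}\setminus X^{\operatorname{reg}}$, at distance $\geq d_x/2$ from $x$ for small $\epsilon$, where $\int_0^T K\,dt$ and $\int_0^T|\nabla K|\,dt$ are bounded by Gaussian decay. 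Applying H\"older with conjugate exponent $p'=p/(p-1)<2$ (possible since $p>2$), choosing $\sigma>0$ with $p'<2-\sigma$, and using $\|\Delta\eta_\epsilon\|_{L^{2-\sigma}}+\|\nabla\eta_\epsilon\|_{L^{4-\sigma}}\to 0$ from Lemma \ref{cutofflemma}, these terms vanish. Since $\psi_\epsilon(x)\to 1$, the pointwise limit reads $v(x)\leq e^{Ar^2}u(x,r^2)+Cr^{-2n/p}\|v\|_{L^p}$.

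Finally, H\"older against the Gaussian estimate of Lemma \ref{heatkernel}(v) gives $\|K(x,\cdot,r^2)\|_{L^{p'}}\leq Cr^{-2n/p}$, since $\int K(x,y,r^2)^{p'}\omega^n(y)\leq Cr^{-2n(p'-1)}$ by the Gaussian pointwise bound and the noncollapsed volume comparison inherited from the $RCD(\lambda,2n)$-structure. Therefore $u(x,r^2)\leq Cr^{-2n/p}\|v\|_{L^p(B(x_0,5r)\cap X^{\operatorname{reg}})}$, and combining with the previous step yields the claimed estimate; taking the supremum over $x$ finishes the proof. The main technical difficulty is the vanishing of the $\eta_\epsilon$-error terms, and the hypothesis $p>2$ is essential because the cutoffs from Lemma \ref{cutofflemma} have derivatives integrable just short of $W^{2,2}$, so only the strict inequality $p'<2$ available when $p>2$ permits absorbing the error via H\"older duality.
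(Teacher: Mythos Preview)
Your proposal is correct and follows essentially the same approach as the paper: both use the heat-kernel representation against the product cutoff $\phi\,\eta_\epsilon$, integrate by parts to shift derivatives off $v$, control the $\phi$-error via Gaussian decay on the annulus, and kill the $\eta_\epsilon$-error via H\"older with exponent $p'=p/(p-1)<2$ together with Lemma~\ref{cutofflemma}, then take $\epsilon\to 0$ for fixed $x$ before taking the supremum. Your forward-time formulation with $f(t)=e^{At}u(x,t)$ is only a cosmetic reparametrization of the paper's computation with $K(x,y,r^2-t)$.
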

\begin{proof} By the discussion preceding \eqref{phicutoff}, we can choose $\phi_r:X^{\operatorname{reg}}\to [0,1]$ such that $\phi_r|_{B(x_0,2r)}\equiv1$, $\text{supp}(\phi_r)\subseteq B(x_0,4r)$, and 
$$r^2 |\Delta \phi_r|+r|\nabla \phi_r|\leq C(n,\lambda)$$
on $X^{\operatorname{reg}}$. Let $\eta_{\epsilon}$ be as in Lemma \ref{cutofflemma}, and let $K$ be as in Lemma \ref{heatkernel}, with $C=C(\overline{B})<\infty$ the constant from that lemma. Let $\langle \cdot ,\cdot \rangle$ denote the $\mathbb{C}$-bilinear extension of $g$ to $TX\otimes_{\mathbb{R}}\mathbb{C}$. We integrate by parts to get
\begin{align*}
\frac{d}{dt}\int_{X^{\operatorname{reg}}}v & (y)\phi_{r}(y)\eta_{\epsilon}(y)K(x,y,r^2-t)\omega^n(y)\\
= & -\int_{X^{\operatorname{reg}}}\left(\phi_{r}(y)\eta_{\epsilon}(y)\Delta v(y)+v(y)\eta_{\epsilon}(y)\Delta\phi_{r}(y)+v(y)\phi_{r}(y)\Delta\eta_{\epsilon}(y)\right)K(x,y,r^2-t)\omega^n(y)\\
 & -2\text{Re}\int_{X^{\operatorname{reg}}}\left(\langle\nabla\phi_{r},\overline{\nabla}\eta_{\epsilon}\rangle(y)v(y)+\eta_{\epsilon}(y)\langle\nabla\phi_{r},\overline{\nabla}v\rangle(y)+\phi_{r}(y)\langle\nabla\eta_{\epsilon},\overline{\nabla}v\rangle(y)\right)K(x,y,r^2-t)\omega^n(y)\\
= & -\int_{X^{\operatorname{reg}}}\left(\phi_{r}(y)\eta_{\epsilon}(y)\Delta v(y)+v(y)\eta_{\epsilon}(y)\Delta\phi_{r}(y)+v(y)\phi_{r}(y)\Delta\eta_{\epsilon}(y)\right)K(x,y,r^2-t)\omega^n(y)\\
 & +2\text{Re}\int_{X^{\operatorname{reg}}}\left(\langle\nabla\phi_{r},\overline{\nabla}\eta_{\epsilon}\rangle+\eta_{\epsilon}\Delta\phi_{r}+\eta_{\epsilon}\langle\nabla\phi_{r},\overline{\nabla}\log K(x,\cdot,r^2-t)\rangle\right)(y)v(y)K(x,y,r^2-t)\omega^n(y)\\
 & +2\text{Re}\int_{X^{\operatorname{reg}}}\left(\phi_{r}\Delta\eta_{\epsilon}+\phi_{r}\langle\nabla\eta_{\epsilon},\overline{\nabla}\log K(x,\cdot,r^2-t)\rangle\right)(y) v(y) K(x,y,r^2-t)\omega^n(y)\\
\leq & A \int_{X^{\operatorname{reg}}} v(y)\phi_r(y)\eta_{\epsilon}(y)K(x,y,r^2-t)\omega^n(y)\\
& + C\int_{X^{\operatorname{reg}}\cap B(x_0,4r)}\left(|\Delta\eta_{\epsilon}|+r^{-1}|\nabla\eta_{\epsilon}|+|\Delta\phi_{r}|\right)(y)v(y)K(x,y,r^2-t)\omega^n(y)\\
 & +C\int_{X^{\operatorname{reg}}\cap B(x_0,4r)}(|\nabla\phi_{r}|+|\nabla\eta_{\epsilon}|)(y)v(y)|\nabla_{y}K(x,y;r^2-t)|\omega^n(y)
\end{align*}
for any $x\in X^{\operatorname{reg}}$ and $t\in[0,r^2)$. For $x\in B(x_0,r)\cap X^{\operatorname{reg}}$
fixed, there exists $r_{0}=r_{0}(x)>0$ such that for all $\epsilon>0$
sufficiently small, we have
\[
d(\operatorname{supp}(1-\eta_{\epsilon}),x)\geq r_{0}.
\]
Letting $q:=\left(1-\frac{1}{p}\right)^{-1}\in(1,2)$, we can use Lemma \ref{heatkernel}(v) to estimate
\begin{align*}
\int_{X^{\operatorname{reg}}\cap B(x_0,4r)} |\Delta\eta_{\epsilon}(y)|v(y)K(x,y,r^2-t)\omega^n(y) \hspace{-68mm} &\\
\leq & \left(\int_{X^{\operatorname{reg}}\cap B(x_0,4r)}|\Delta\eta_{\epsilon}(y)|^{q}K^{q}(x,y,r^2-t)\omega^n(y)\right)^{\frac{1}{q}}\left(\int_{X^{\operatorname{reg}}\cap B(x_0,4r)}v(y)^{p}\omega^n(y)\right)^{\frac{1}{p}}\\
\leq & C\left(\int_{\operatorname{supp}(1-\eta_{\epsilon})\cap B(x_0,4r)}\frac{|\Delta\eta_{\epsilon}(y)|^{q}}{(r^2-t)^{qn}}\exp\left(-\frac{r_{0}^{2}}{C(r^2-t)}\right)\omega^n(y)\right)^{\frac{1}{q}}\left(\int_{X^{\operatorname{reg}}\cap B(x_0,4r)}v(y)^{p}\omega^n(y)\right)^{\frac{1}{p}}\\
\le & C(r_{0})\left(\int_{B(x_0,4r)}|\Delta\eta_{\epsilon}|^{q}\omega^n(y)\right)^{\frac{1}{q}}\left(\int_{X^{\operatorname{reg}}\cap B(x_0,4r)}v(y)^{p}\omega^n(y)\right)^{\frac{1}{p}}
\end{align*}
for all $t\in [0,r^2)$ when $\epsilon=\epsilon(x)>0$ is sufficiently small. Using \eqref{phicutoff}, we similarly have
\begin{align*}
\int_{X^{\operatorname{reg}}\cap B(x_0,4r)}|\Delta\phi_{r}(y)| & v(y)K(x,y,r^2-t)\omega^n(y)\\
\leq & \frac{C}{r^{2}}\int_{X^{\operatorname{reg}}\cap(B(x_0,4r)\setminus B(x_0,2r))}v(y)K(x,y,r^2-t)\omega^n(y)\\
\leq & \frac{C}{r^{2}}\frac{e^{-\frac{r^2}{C(r^2-t)}}}{(r^2-t)^{n}}\int_{X^{\operatorname{reg}}\cap(B(x_0,4r)\setminus B(x_0,2r))}v(y)\omega^n(y)\\
\leq & Cr^{\frac{2n}{q}-2n-2}\frac{r^{2n}}{(r^2-t)^{n}}e^{-\frac{r^2}{C(r^2-t)}}\left(\int_{X^{\operatorname{reg}}\cap B(x_0,4r)}v(y)^{p}\omega^n(y)\right)^{\frac{1}{p}}\\
\leq & \frac{C}{r^2}\left(\frac{1}{r^{2n}}\int_{X^{\operatorname{reg}}\cap B(x_0,4r)}v(y)^{p}\omega^n(y)\right)^{\frac{1}{p}}.
\end{align*}
The remaining terms can be estimated similarly, using Lemma \ref{heatkernel}(vi):
\begin{align*}
\int_{X^{\operatorname{reg}}}|\nabla\phi_{r}|(y) & v(y)|\nabla_{y}K(x,y;r^2-t)|\omega^n(y)\\
\leq & \frac{Ce^{-\frac{r^2}{C(r^2-t)}}}{r(r^2-t)^{n+\frac{1}{2}}}\int_{X^{\operatorname{reg}}\cap(B(x_0,4r)\setminus B(o,r))}v(y)\omega^n(y)\\
\leq & \frac{C}{r^2}\left(\frac{1}{r^{2n}}\int_{X^{\operatorname{reg}}\cap B(x_0,4r)}v(y)^{p}\omega^n(y)\right)^{\frac{1}{p}},
\end{align*}
\[
\int_{X^{\operatorname{reg}}}|\nabla\eta_{\epsilon}|(y)v(y)|\nabla_{y}K(x,y;r^2-t)|\omega^n(y)\leq C(r,r_{0},p)\left(\int_{B(x_0,4r)}|\nabla\eta_{\epsilon}|^{q}\omega^n(y)\right)^{\frac{1}{q}}\left(\int_{X^{\operatorname{reg}}\cap B(x_0,4r)}v(y)^{p}\omega^n(y)\right)^{\frac{1}{p}}.
\]
Integrating from $t=0$ to $t=r^2$, using Lemma \ref{heatkernel}(iii), and then combining the above estimates with \eqref{etacutoff} yields
\begin{align*}
v(x)\leq & e^{Ar^2}\int_{B(x_0,4r)\cap X^{\operatorname{reg}}}v(y)K(x,y,r^2)\omega^n(y)\\
 & +Ce^{Ar^2}\left(\frac{1}{r^{2n}}\int_{X^{\operatorname{reg}}\cap B(x_0,4r)}v(y)^{p}\omega^n(y)\right)^{\frac{1}{p}}+\Psi(\epsilon|r_{0},r).
\end{align*}
By H\"older's inequality, we have
\begin{align*} \int_{B(x_0,4r)\cap X^{\operatorname{reg}}} v(y)K(x,y,r^2)\omega^n(y) \leq \frac{C}{r^{2n}} \int_{B(x_0,4r)} v\omega^n \leq C\left(\frac{1}{r^{2n}}\int_{X^{\operatorname{reg}}\cap B(x_0,4r)}v(y)^{p}\omega^n(y)\right)^{\frac{1}{p}}
\end{align*}
so the claim follows by again combining expressions, and taking $\epsilon \searrow 0$. 
\end{proof}

Using an improved Kato inequality, we now show that for any holomorphic section $u$ of $L^m$ for suitable line bundles $L$, the quantities $|u|_{h}^{\alpha},|\nabla^{h}u|_{h}^{\alpha}$
satisfy the hypotheses of Lemma \ref{elliptic} for appropriate
$\alpha\in(0,1)$. This will be used in the proof of the $C^{0}$ and $C^{1}$ estimates
for the holomorphic sections of $L^m$ constructed
using the H\"ormander technique. 
\begin{lem}
\label{C1estimate} Suppose $n\geq 2$. For any precompact open set $B \subseteq X$, there exists $C=C(\lambda,n,B)<\infty$ such that the following holds. Suppose $(L,h)$ is a holomorphic Hermitian line bundle on $X^{\operatorname{reg}}$ with curvature $\Theta_h= m \omega$ for some $m\in \mathbb{N}$. For any $x_0 \in X$ and $r\in (0,\frac{1}{5}]$ with $B(x_0,50r)\subseteq B$, and any $u\in H^{0}(B(x_0,50r)\cap X^{\operatorname{reg}},L^{m})$
satisfying $\int_{B(x_0,50r)\cap X^{\operatorname{reg}}}|u|_{h}^{2}\omega^{n}<\infty$, we then have
\begin{equation} \label{goodC0estimate} \sup_{B(x_0,r)\cap X^{\operatorname{reg}}} |u|_h^2 \leq  \frac{Ce^{Cm r^2}}{r^{2n}} \int_{B(x_0,50r) \cap X^{\operatorname{reg}}} |u|_h^2 \omega^n. \end{equation}
\begin{equation} \label{goodC1estimate} \sup_{B(x_0,r)\cap X^{\operatorname{reg}}} |\nabla^h u|_{g\otimes h}^2 \leq  \left( m +\frac{1}{r^2}\right)\frac{Ce^{Cm r^2}}{r^{2n}} \int_{B(x_0,50r) \cap X^{\operatorname{reg}}} |u|_h^2 \omega^n. \end{equation}

\end{lem}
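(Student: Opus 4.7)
The plan is to reduce both estimates to the subsolution bound of Lemma \ref{elliptic}. The main constraint this imposes is the requirement $p > 2$ strictly, whereas the hypothesis only provides $|u|_h \in L^2$; to navigate this, I will apply the lemma to the power $v := |u|_h^\alpha$ for a suitable $\alpha \in (0,1)$, using an improved Kato inequality for holomorphic sections to absorb the nonpositive chain-rule error in $\Delta v$.

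For the $C^0$ estimate \eqref{goodC0estimate}, the Bochner--Weitzenb\"ock identity on $X^{\operatorname{reg}}$ reads
$$\Delta |u|_h^2 \;=\; 2|\nabla^h u|_{g\otimes h}^2 - 2mn\,|u|_h^2,$$
and the improved Kato inequality for holomorphic sections, $|\nabla |u|_h|^2 \leq c_n\,|\nabla^h u|_{g\otimes h}^2$ for some constant $c_n < 1$ (a consequence of $\bar\partial u = 0$, the identity $\partial |u|_h^2 = h(\nabla^{1,0}u, u)$, and Cauchy--Schwarz on the Hermitian line bundle), combine on $\{u\neq 0\}\cap X^{\operatorname{reg}}$ to give
$$\Delta |u|_h \;\geq\; \Bigl(\tfrac{1}{c_n}-1\Bigr)\frac{|\nabla |u|_h|^2}{|u|_h} - Cm\,|u|_h.$$
Applying the chain rule to $v=|u|_h^\alpha$ for $\alpha\in(\max(2-\tfrac{1}{c_n},0),\,1)$ and regularizing via $(\epsilon+|u|_h^2)^{\alpha/2}$ to extend across the zero set, I expect the distributional bound $\Delta v \geq -Cm\,v$ on $B(x_0,5r)\cap X^{\operatorname{reg}}$. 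Since $\int v^{2/\alpha}\omega^n = \int |u|_h^2\omega^n < \infty$ and $2/\alpha > 2$, Lemma \ref{elliptic} with $A=Cm$ and $p=2/\alpha$ applies, and raising to the power $1/\alpha$ yields \eqref{goodC0estimate}.

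For the $C^1$ estimate \eqref{goodC1estimate}, I first establish the Caccioppoli bound
$$\int_{B(x_0,10r)\cap X^{\operatorname{reg}}}|\nabla^h u|^2\,\omega^n \;\leq\; C\Bigl(m + \tfrac{1}{r^2}\Bigr)\int_{B(x_0,50r)\cap X^{\operatorname{reg}}}|u|_h^2\,\omega^n.$$
This follows by multiplying the Bochner identity above by $\phi^2\eta_\epsilon^2$ with $\phi$ a spatial cutoff equal to $1$ on $B(x_0,10r)$ and supported in $B(x_0,50r)$ and $\eta_\epsilon$ from Lemma \ref{cutofflemma}, integrating by parts, and letting $\epsilon\searrow 0$; the error terms take the form $\int \phi^2|\nabla\eta_\epsilon|^2|u|_h^2\,\omega^n$, which are controlled by $(\sup|u|_h^2)\int|\nabla\eta_\epsilon|^2\omega^n$ using the $C^0$ estimate just established, and vanish upon $\epsilon\searrow 0$ by H\"older's inequality applied to \eqref{etacutoff}. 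Second, the Bochner formula applied to $\nabla^h u$ (viewed as a section of $T^{*(1,0)}X\otimes L^m$, using $Rc(\omega)=\lambda\omega$ with $|\lambda|\leq 1$ and $\Theta_h = m\omega$) gives $\Delta|\nabla^h u|^2 \geq -C(m+1)|\nabla^h u|^2$ on $X^{\operatorname{reg}}$; combined with the analogous improved Kato inequality for bundle-valued $(1,0)$-forms, the same chain-rule argument yields $\Delta|\nabla^h u|^\alpha \geq -C(m+1)|\nabla^h u|^\alpha$. Applying Lemma \ref{elliptic} to $w=|\nabla^h u|^\alpha$, with the $L^{2/\alpha}$ integrability supplied by the Caccioppoli step, gives $\sup|\nabla^h u|^2 \leq C e^{Cmr^2}r^{-2n}\int_{B(x_0,5r)}|\nabla^h u|^2\omega^n$, and chaining with the Caccioppoli bound produces \eqref{goodC1estimate}.

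The main obstacle I anticipate is verifying the improved Kato inequality for the bundle-valued gradient $\nabla^h u$, which is not itself holomorphic: one must use $\bar\partial\nabla^h u = \Theta_h\otimes u$ together with the K\"ahler identities to identify a Kato constant strictly less than $1$, and the resulting analysis is messier than for $u$ itself. As a fallback, once the $C^0$ estimate is in hand, the last step can be replaced by a single pass of Moser iteration on $|\nabla^h u|$ using the RCD Sobolev inequality on $\hat X$ combined with the cutoffs of Lemma \ref{cutofflemma}, which bootstraps $|\nabla^h u|$ from $L^2$ to some $L^p$ with $p>2$ and then permits a direct application of Lemma \ref{elliptic} with $\alpha=1$.
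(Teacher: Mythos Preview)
Your proposal is correct and follows the paper's approach. Two remarks that sharpen your outline. For the $C^0$ step no improved Kato constant $c_n<1$ is actually needed: because $L$ has rank one, Cauchy--Schwarz is an equality and one has $|\nabla|u|_h^2|_g^2=|u|_h^2|\nabla^h u|_{g\otimes h}^2$ exactly, which together with $\Delta|u|_h^2=|\nabla^h u|^2-nm|u|_h^2$ already yields $\Delta(|u|_h^2+\epsilon)^{\alpha/2}\geq -Cm(|u|_h^2+\epsilon)^{\alpha/2}$ for \emph{every} $\alpha\in(0,1)$; the paper simply takes $\alpha=\tfrac12$, $p=4$. For the $C^1$ step the paper carries out precisely the refined Kato computation you flag as the obstacle: the curvature contribution $\bar\partial(\nabla^h u)=m\omega\otimes u$ produces the extra terms $|\nabla^h u|^2\bigl(2m|u|_h|\nabla^h\nabla^h u|+m^2|u|_h^2\bigr)$ in the Kato bound for $|\nabla|\nabla^h u|^2|^2$, and after balancing against the good Bochner term $+nm^2|u|_h^2$ in $\Delta|\nabla^h u|^2$, the chain rule gives $\Delta(|\nabla^h u|^2+\epsilon)^{\alpha/2}\geq -Cm(|\nabla^h u|^2+\epsilon)^{\alpha/2}$ exactly when $\alpha\geq\tfrac{2}{n+1}$. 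Taking $\alpha=\tfrac{2}{n+1}$ gives $p=n+1>2$, and this is where the hypothesis $n\geq 2$ enters; your Moser fallback is not needed.
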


\begin{proof}
We compute
\begin{align*}
\Delta|u|_h^{2}= & g^{\overline{j}i}h\nabla_{i}\nabla_{\overline{j}}\left(u\overline{u}\right)\\
= & g^{\overline{j}i}h\nabla_{i}(u\overline{\nabla_{j}u})\\
= & g^{\overline{j}i}h\nabla_{i}u\overline{\nabla_{j}u}-g^{\overline{j}i}hu\overline{[\nabla_{j},\nabla_{\overline{i}}]u}\\
= & |\nabla^{h}u|_{g\otimes h}^{2}-nm |u|_h^{2},
\end{align*}
so that from 
\[
|\nabla|u|_h^{2}|_g^{2}=g^{\overline{j}i}(hu\overline{\nabla_{j}u}\cdot h \overline{u}\nabla_{i}u)=|u|_h^{2}|\nabla^{h}u|_{g\otimes h}^{2},
\]
it follows that for any $\alpha \in (0,1)$ and $\epsilon>0$, we have $v:=(|u|_h^{2}+\epsilon)^{\frac{\alpha}{2}}$ satisfies
\begin{align*}
\Delta v= & \frac{\alpha}{2}g^{\overline{j}i}\nabla_{i}\left(\frac{\nabla_{\overline{j}}|u|_h^{2}}{(|u|_h^{2}+\epsilon)^{1-\frac{\alpha}{2}}}\right)\\
= & \frac{\alpha}{2}\left(\frac{\Delta|u|_h^{2}}{(|u|_h^{2}+\epsilon)^{1-\frac{\alpha}{2}}}-\left(1-\frac{\alpha}{2}\right)\frac{|\nabla|u|_h^{2}|_g^{2}}{(|u|_h^{2}+\epsilon)^{2-\frac{\alpha}{2}}}\right)\\
= & \frac{\alpha}{2(|u|_h^{2}+\epsilon)^{1-\frac{\alpha}{2}}}\left(|\nabla^{h}u|_{g\otimes h}^{2}-nm|u|_h^{2}-\left(1-\frac{\alpha}{2}\right)\frac{|u|_h^{2}}{|u|_h^{2}+\epsilon}|\nabla^{h}u|_{g\otimes h}^{2}\right)\\
\geq & -\frac{nm\alpha|u|_h^{2}}{2(|u|_h^{2}+\epsilon)^{1-\frac{\alpha}{2}}}\\
\geq & -Cm v.
\end{align*}
Moreover, $|u|_h\in L^{2}(B(x_0,50r)\cap X^{\operatorname{reg}})$ implies $v:= (|u|_h^2+\epsilon)^{\frac{1}{4}}\in L^{4}(B(x_0,50r)\cap X^{\operatorname{reg}})$
(choosing $\alpha =\frac{1}{2}$). We can thus apply
Lemma \ref{elliptic}, replacing $r$ with $10r$ and taking $p=4$ in order to obtain
$$\sup_{B(x_0,10r)\cap X^{\operatorname{reg}}} (|u|_h^2+\epsilon)^{\frac{1}{4}} \leq C(\lambda,B)e^{Cmr^2} \left( \frac{1}{r^{2n}}\int_{B(x_0,50r)\cap X^{\operatorname{reg}}} (|u|_h^2+\epsilon) \omega^n \right)^{\frac{1}{4}}.$$
Taking $\epsilon \to 0$ gives
\begin{equation} \label{eq:C0} \sup_{B(x_0,10r)\cap X^{\operatorname{reg}}}|u|_h^2 \leq \frac{C(\lambda,B)e^{Cmr^2}}{r^{2n}} \int_{B(x_0,50r)\cap X^{\operatorname{reg}}}
|u|_h^2 \omega^n. \end{equation}

Next, we let $\phi_r$ be as in \eqref{phicutoff}, supported in $B(x_0,2r)$, and let $\eta_{\epsilon}$ be as in Lemma \ref{etacutoff}. Integrate $\Delta|u|_{h}^{2}=|\nabla^{h}u|_{g\otimes h}^{2}-nm|u|_{g\otimes h}^{2}$
against $\phi_{5r}^2\eta_{\epsilon}^2$ and use Cauchy's inequality to obtain
\begin{align*} \int_{X^{\operatorname{reg}}} |\nabla^h u|_{g\otimes h}^2 \phi_{5r}^2\eta_{\epsilon}^2\omega^n  =& -2\text{Re}\int_{X^{\operatorname{reg}}} \langle \nabla |u|_h^2,\overline{\nabla}(\phi_{5r}\eta_{\epsilon})^2\rangle \omega^n + nm \int_{B(x_0,10r)\cap X^{\operatorname{reg}}}|u|_h^2 \omega^n \\  \leq& \frac{1}{2}\int_{X^{\operatorname{reg}}} |\nabla^h u|_{g\otimes h}^2 \phi_{5r}^2 \eta_{\epsilon}^2\omega^n + C\int_{X^{\operatorname{reg}}} |u|_h^2 (\phi_{5r}^2|\nabla \eta_{\epsilon}|^2 + \eta_{\epsilon}^2|\nabla \phi_{5r}|^2)\omega^n \\ &+ nm \int_{B(x_0,10r)\cap X^{\operatorname{reg}}} |u|_h^2 \omega^n. 
\end{align*}
Because $\sup_{B(x_0,10r)\cap X^{\operatorname{reg}}} |u|_h<\infty$, we can take $\epsilon\searrow 0$ to obtain
$$\int_{B(x_0,5r)\cap X^{\operatorname{reg}}}|\nabla^h u|_{g\otimes h}^2 \omega^n \leq C\left( m + \frac{1}{r^2} \right) \int_{B(x_0,10r)} |u|_h^2 \omega^n.$$
To obtain the $C^1$ estimate for $u$, we use will use the identity
\begin{align*}
\Delta|\nabla^{h}u|_{g\otimes h}^{2}= & g^{\overline{j}i}g^{\overline{\ell}k}\nabla_{i}\nabla_{\overline{j}}\left(\nabla_{k}u\overline{\nabla_{\ell}u}\right)\\
= & g^{\overline{j}i}g^{\overline{\ell}k}\nabla_{i}\left(-[\nabla_{k},\nabla_{\overline{j}}]u\cdot\overline{\nabla_{\ell}u}+\nabla_{k}u\overline{\nabla_{j}\nabla_{\ell}u}\right)\\
= & -mg^{\overline{\ell}i}\nabla_{i}\left(u\overline{\nabla_{\ell}u}\right)+|\nabla^{h}\nabla^{h}u|_{g\otimes h}^{2}-g^{\overline{j}i}g^{\overline{\ell}k}\nabla_{k}u\overline{[\nabla_{j},\nabla_{\overline{i}}]\nabla_{\ell}u}-g^{\overline{j}i}g^{\overline{\ell}k}\nabla_{k}u\overline{\nabla_{j}[\nabla_{\ell},\nabla_{\overline{i}}]u}\\
= & -m|\nabla^{h}u|_{g\otimes h}^{2}+mg^{\overline{\ell}i}u\overline{[\nabla_{\ell},\nabla_{\overline{i}}]u}+|\nabla^{h}\nabla^{h}u|_{g\otimes h}^{2}+g^{\overline{j}i}g^{\overline{\ell}k}\nabla_{k}u\overline{R_{j\overline{i}\ell\overline{p}}g^{\overline{p}q}\nabla_{q}u}\\
 & -nm|\nabla^{h}u|_{g\otimes h}^{2}-m|\nabla^{h}u|_{g\otimes h}^{2}\\
= & |\nabla^{h}\nabla^{h}u|_{g\otimes h}^{2}-\left((n+2)m-\lambda\right)|\nabla^{h}u|_{g\otimes h}^{2}+nm^{2}|u|_{h}^{2},
\end{align*}
as well as the following refined Kato inequality:
\begin{align*}
|\nabla|\nabla^{h}u|_{g\otimes h}^{2}|_g^{2}= & h^2 g^{\overline{j}i}\nabla_{i}(g^{\overline{\ell}k}\nabla_{k}u\overline{\nabla_{\ell}u})\nabla_{\overline{j}}(g^{\overline{q}p}\nabla_{p}u\overline{\nabla_{q}u})\\
= & h^2 g^{\overline{j}i}g^{\overline{\ell}k}g^{\overline{q}p}\left(\nabla_{i}\nabla_{k}u\overline{\nabla_{\ell}u}-\nabla_{k}u\overline{[\nabla_{\ell},\nabla_{\overline{i}}]u}\right)\left(\nabla_{p}u\overline{\nabla_{j}\nabla_{q}u}-\overline{\nabla_{q}u}[\nabla_{p},\nabla_{\overline{j}}]u\right)\\
= & h^2 g^{\overline{j}i}g^{\overline{\ell}k}g^{\overline{q}p}\left(\nabla_{i}\nabla_{k}u\overline{\nabla_{\ell}u}- m g_{i\overline{\ell}}\bar{u}\nabla_{k}u\right)\left(\nabla_{p}u\overline{\nabla_{j}\nabla_{q}u}-m g_{p\overline{j}}u\overline{\nabla_{q}u}\right)\\
\leq & h^2 g^{\overline{j}i}g^{\overline{\ell}k}g^{\overline{q}p}\nabla_{i}\nabla_{k}u\overline{\nabla_{j}\nabla_{q}u}\nabla_{p}u\overline{\nabla_{\ell}u}+|\nabla^{h}u|_{g\otimes h}^{2}(2|\nabla^{h}\nabla^{h}u|_{g\otimes h}\cdot m|u|_h+m^2|u|_h^{2})\\
\leq & |\nabla^{h}\nabla^{h}u|_{g\otimes h}^{2}|\nabla^{h}u|_{g\otimes h}^{2}+|\nabla^{h}u|_{g\otimes h}^{2}(2m|u|_h |\nabla^{h}\nabla^{h}u|_{g\otimes h}+m^2|u|_h^{2}),
\end{align*}
where in the last line, we used the Cauchy-Schwarz inequality (for ease of computation, one may assume $g_{i\overline{j}}=\delta_{ij}$ and $\nabla_i u = |\nabla^h u|_{g\otimes h}\delta_{i1}$ at a given point). 

For any $\alpha \in (0,1)$ and $\epsilon>0$, we combine the above expressions to obtain
\begin{align*} 
\Delta &\left(|\nabla^{h}u|_{g\otimes h}^{2}+\epsilon\right)^{\frac{\alpha}{2}} \\
= & \frac{\alpha}{2}g^{\overline{j}i}\nabla_{i}\left(\frac{\nabla_{\overline{j}}|\nabla^{h}u|_{g\otimes h}^{2}}{(|\nabla^{h}u|_{g\otimes h}^{2}+\epsilon)^{1-\frac{\alpha}{2}}}\right)\\
= & \frac{\alpha}{2}\left(\frac{\Delta|\nabla^{h}u|_{g\otimes h}^{2\alpha}}{(|\nabla^{h}u|_{g\otimes h}^{2}+\epsilon)^{1-\frac{\alpha}{2}}}-\left(1-\frac{\alpha}{2}\right)\frac{|\nabla|\nabla^{h}u|_{g\otimes h}^{2}|_g^2}{(|\nabla^{h}u|_{g\otimes h}^{2}+\epsilon)^{2-\frac{\alpha}{2}}}\right)\\
\geq & \frac{\alpha}{2}\left(\frac{|\nabla^{h}\nabla^{h}u|_{g\otimes h}^{2}-Cm|\nabla^{h}u|_{g\otimes h}^{2}+n m^2|u|_h^2}{(|\nabla^{h}u|_{g\otimes h}^{2}+\epsilon)^{1-\frac{\alpha}{2}}} \right) \\&- \frac{\alpha}{2}\left(1-\frac{\alpha}{2}\right)\frac{|\nabla^{h}\nabla^{h}u|_{g\otimes h}^{2}|\nabla^{h}u|_{g\otimes h}^{2}+|\nabla^{h}u|_{g\otimes h}^{2}(2m|u|_h |\nabla^{h}\nabla^{h}u|_{g\otimes h}+m^2 |u|_h^2)}{(|\nabla^{h}u|_{g\otimes h}^{2}+\epsilon)^{2-\frac{\alpha}{2}}}
\\
\geq & -Cm(|\nabla^h u|_{g\otimes h}^2+\epsilon)^{\frac{\alpha}{2}} + \frac{\alpha m^2}{2} \left( n+1-\frac{2}{\alpha}\right)\frac{|u|_h^2 }{(|\nabla^h u|_{g\otimes h}^2+\epsilon)^{1-\frac{\alpha}{2}}}
\end{align*}
Because $n\geq 2$, have $\alpha :=\frac{2}{n+1}\in (0,1)$. It follows that $v:=(|\nabla^{h}u|_{g\otimes h}^{2}+\epsilon)^{\frac{\alpha}{2}}$
satisfies $\Delta v\geq-Cmv$. Moreover, we know from $|\nabla^{h}u|_{g\otimes h}\in L^{2}(B(x_0,5r)\cap X^{\operatorname{reg}})$
that $v\in L_{\text{loc}}^{p}(B(x_0,5r)\cap X^{\operatorname{reg}})$, where $p:=n+1>2$. We can thus apply Lemma \ref{elliptic} and then take $\epsilon \searrow 0$ to get the remaining claim. 
\end{proof}

\section{Construction of peaked holomorphic Sections}
Our goal in this section is to prove Theorems~\ref{QCartierTheorem} and \ref{KETheorem}. 
Throughout this section, we suppose that $(X,\omega)$ is a rough K\"ahler-Einstein variety.

\begin{prop}
\label{perturb} For any $\epsilon>0$, there exist $\zeta=\zeta(\epsilon)>0$, and $D_{0}=D_{0}(\epsilon)<\infty$ such that the following holds
for any $\ell\in\mathbb{N}^{\times}$. Suppose there exist $x_0 \in X$, $D\geq D_0$, a Stein neighborhood $B\subseteq X$ of $x_0$ such that $B_{g_{\ell}}(x_0,100D) \cap X^{\operatorname{reg}}\subseteq B$, and a holomorphic Hermitian line bundle $(L,h)$ on $B \cap X^{\operatorname{reg}}$ such that $\Theta_h = \omega$. Set $\omega_{\ell}:=\ell \omega$, so that $h_{\ell}:=h^{\otimes \ell}$ is a Hermitian metric on $L^{\ell}$ satisfying $\Theta_{h_{\ell}}=\omega_{\ell}$. Assume $v\in C_{c}^{\infty}(B\cap X^{\operatorname{reg}},L^{\ell})$,  $U\subseteq X^{\operatorname{reg}}$ is open, and that the following hold
for some $\ell \in \mathbb{N}^{\times}$:

$(i)$ $\int_{B\cap X^{\operatorname{reg}}}|v|_{h_{\ell}}^{2}\omega_{\ell}^{n}<(1+\zeta)(2\pi)^{n},$ 

$(ii)$ $\sup_{B\cap U}\left|e^{-\frac{1}{2}d_{g_{\ell}}^{2}(x_0,\cdot)}-|v|_{h_{\ell}}^{2}\right|<\zeta$, 

$(iii)$ $\int_{B\cap X^{\operatorname{reg}}}|\overline{\partial}v|_{\omega_{\ell}\otimes h_{\ell}}^{2}\omega_{\ell}^{n}<\zeta$,

$(iv)$ $\sup_{B\cap U}|\overline{\partial}v|_{\omega_{\ell}\otimes h_{\ell}}^{2}<\zeta$, 

$(v)$ for any $z\in B_{g_{\ell}}(x_0,100D)$ with $B_{g_{\ell}}(z,\zeta)\subseteq X^{\operatorname{reg}}$ and $\sup_{B_{g_{\ell}}(z,\zeta)}|Rm|_{g_{\ell}}\leq\zeta^{-2}$, we have $z\in U$,

$(vi)$ $\operatorname{supp}(v)\subseteq B_{g_{\ell}}(x_0,50D)$,

\noindent Then there exists a holomorphic section $u\in H^{0}(B_{g_{\ell}}(x_0,D)\cap X^{\operatorname{reg}},L^{\ell})$
satisfying the following:

$(a)$ $\int_{B\cap X^{\operatorname{reg}}}|u|_{h_{\ell}}^{2}\omega_{\ell}^n<(1+\epsilon)(2\pi)^n$,

$(b)$ $\sup_{B_{g_{\ell}}(x_0,1)\cap X^{\operatorname{reg}}}\left|e^{-\frac{1}{2}d_{g_{\ell}}^{2}(x_0,\cdot)}-|u|_{h_{\ell}}^{2}\right|<\epsilon.$
\end{prop}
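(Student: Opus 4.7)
My plan is the standard Donaldson-Sun approach: use H\"ormander's $L^2$-method to perturb $v$ to a true holomorphic section, with pointwise control coming from the elliptic estimates of Section 2 together with the smooth convergence of the metric at $g_\ell$-scale $1$ on the good set $U$.

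First I would apply Demailly's $L^2$ estimate \cite{demaillyEstimationsL2Pour1982} to the equation $\bar\partial\tau = \bar\partial v$ on the Stein domain $B\cap X^{\operatorname{reg}}$. Since $\Theta_{h_\ell} = \omega_\ell$ and $\mathrm{Ric}(\omega_\ell) = (\lambda/\ell)\omega_\ell$, the Nakano positivity of $L^\ell\otimes K_{X^{\operatorname{reg}}}^{*}$ with curvature $(1 + \lambda/\ell)\omega_\ell \geq \tfrac12 \omega_\ell$ holds once $\ell$ is large compared to $|\lambda|$ (which we can arrange by taking $D_0 = D_0(\epsilon,\lambda)$ large). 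Demailly's estimate then yields $\tau \in L^2(B\cap X^{\operatorname{reg}}, L^\ell)$ with
\[ \int_{B\cap X^{\operatorname{reg}}} |\tau|_{h_\ell}^2 \,\omega_\ell^n \leq C \int_{B\cap X^{\operatorname{reg}}} |\bar\partial v|_{\omega_\ell\otimes h_\ell}^2 \,\omega_\ell^n < C\zeta \]
by (iii). Setting $u := v - \tau$, we have $u \in H^0(B\cap X^{\operatorname{reg}}, L^\ell)$, and assertion (a) is immediate from (i) and the triangle inequality for $\zeta = \zeta(\epsilon)$ small enough.

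The main obstacle is (b), which requires pointwise control of $|\tau|_{h_\ell}^2 = |v-u|_{h_\ell}^2$ on $B_{g_\ell}(x_0,1)\cap X^{\operatorname{reg}}$. Fix $z\in B_{g_\ell}(x_0,1)\cap X^{\operatorname{reg}}$ such that $B_{g_\ell}(z,1)\subseteq X^{\operatorname{reg}}$ has uniformly bounded curvature; by (v) such $z$ lie in $U$, and by Lemma~\ref{improvedepsreg}(ii) the rescaled metric $g_\ell$ on $B_{g_\ell}(z,1)$ has uniform $C^\infty$ bounds. On this smooth unit ball I would decompose $\tau|_{B_{g_\ell}(z,1)} = \tau_1 + \tau_2$, where $\tau_1$ solves $\bar\partial \tau_1 = \bar\partial v$ via a standard local $\bar\partial$-solution operator with $\sup|\tau_1|_{h_\ell}^2 \leq C\sup|\bar\partial v|_{\omega_\ell\otimes h_\ell}^2 < C\zeta$ by (iv), and $\tau_2 := \tau - \tau_1$ is holomorphic on the ball. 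Applying Lemma~\ref{C1estimate} to $\tau_2$ then gives $\sup_{B_{g_\ell}(z,1/2)} |\tau_2|_{h_\ell}^2 \leq C\int |\tau_2|_{h_\ell}^2 \,\omega_\ell^n \leq C\zeta$, hence $|\tau|_{h_\ell}^2(z) \leq C\zeta$; combined with (ii), this proves (b) at such $z$.

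For the remaining points $z \in B_{g_\ell}(x_0,1)\cap X^{\operatorname{reg}}$ that do not satisfy the smoothness condition above, $z$ lies within $g_\ell$-distance $O(1)$ of the singular set $\widehat{X}\setminus X^{\operatorname{reg}}$, which by Proposition~\ref{codim4} has Hausdorff codimension at least $4$. Lemma~\ref{C1estimate} applied to the holomorphic section $u$ yields a uniform $C^0$ bound together with a uniform $g_\ell$-Lipschitz bound on $|u|_{h_\ell}^2$ on $B_{g_\ell}(x_0,2)\cap X^{\operatorname{reg}}$. Since the Gaussian $e^{-d_{g_\ell}^2(x_0,\cdot)/2}$ is itself Lipschitz on $\widehat{X}$, the pointwise estimate established on the dense subset of "good" $z \in U$ propagates to all of $B_{g_\ell}(x_0,1)\cap X^{\operatorname{reg}}$ by continuity. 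The principal technical difficulty is the third step: constructing the local $\bar\partial$-solution operator with constants uniform in $\ell$ and in the base point $z$, which rests on the smooth tangent cone convergence in Lemma~\ref{improvedepsreg}(ii), together with a careful choice of $\zeta(\epsilon)$ and $D_0(\epsilon)$ so that the parameter hierarchy closes.
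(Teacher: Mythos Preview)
Your approach matches the paper's: solve $\bar\partial\tau = \bar\partial v$ via Demailly's $L^2$ estimate on $B\cap X^{\operatorname{reg}}$, obtain (a) from (i) and (iii), then combine a pointwise bound on $\tau$ at well-behaved points with the $C^1$ estimate of Lemma~\ref{C1estimate} on the holomorphic section $u$ to deduce (b). The paper handles good points slightly more directly---just local elliptic regularity for $\tau$ on a small smooth ball rather than your splitting $\tau = \tau_1 + \tau_2$---but your version works equally well.

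The one imprecise step is the final ``propagates by continuity''. The set of $z$ with $B_{g_\ell}(z,1)\subset X^{\operatorname{reg}}$ and bounded curvature is \emph{not} dense in $B_{g_\ell}(x_0,1)\cap X^{\operatorname{reg}}$: any regular point within $g_\ell$-distance $1$ of the singular set fails your condition, and these form an open set. Continuity alone therefore does not close the argument; one needs \emph{quantitative} density. This is exactly what the paper supplies: fix an auxiliary scale $\epsilon'>0$, use Lemma~\ref{improvedepsreg}(i) together with the volume estimate for the quantitative singular stratum \cite{RCDquantstrata} to show that the set $\Sigma(\epsilon')$ of points failing $\epsilon'$-ball regularity has $g_\ell$-volume at most $C(\epsilon')^{1/2}$, and conclude by a volume pigeonhole that every $z\in B_{g_\ell}(x_0,1)$ lies within $g_\ell$-distance $C(\epsilon')^{1/(4n)}$ of some $z'\notin\Sigma(\epsilon')$. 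The Lipschitz bound on $|u|_{h_\ell}^2$ from Lemma~\ref{C1estimate} then transfers the estimate from $z'$ to $z$ with error $C(\epsilon')^{1/(4n)}$, and one closes by choosing first $\epsilon'=\epsilon'(\epsilon)$ small, then $\zeta=\zeta(\epsilon,\epsilon')$ small. You have all the necessary ingredients (you invoke both Proposition~\ref{codim4} and the Lipschitz bound), but the order of quantifiers needs to be made explicit in this way.
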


\begin{proof}
Because
$B\setminus X^{\operatorname{reg}}$ is a complex analytic subset of the Stein space $B$, we can apply \cite[Theorem 0.2]{demaillyEstimationsL2Pour1982}
to conclude that $B\cap X^{\operatorname{reg}}$ admits a complete K\"ahler metric.
Using 
$$L^{\ell} \cong (L^{\ell}\otimes K_{X^{\operatorname{reg}}}^{-1})\otimes K_{X^{\operatorname{reg}}},$$
we can identify $v$ with some $\widetilde{v}\in\mathcal{A}_{c}^{n,0}(B\cap X^{\operatorname{reg}},L^{\ell}\otimes K_{X^{\operatorname{reg}}}^{-1})$. Let $\varphi$ be a plurisubharmonic function on $B$ such that $\sqrt{-1}\partial \overline{\partial} \varphi=\omega$. Set $\widetilde{h}:=e^{\lambda \varphi} h_{\ell}\otimes\omega_{\ell}^n$,
which is a Hermitian metric on $L^{\ell}\otimes K_{X^{\operatorname{reg}}}^{-1}$ satisfying
$|\widetilde{v}|_{\widetilde{h}}^{2}=|v|_{h_{\ell}}^{2}e^{\lambda \varphi}\omega_{\ell}^{n}$
and 
\[
\Theta_{\widetilde{h}}=Rc(\omega)-\lambda\sqrt{-1}\partial\overline{\partial}\varphi+\omega_{\ell}=\omega_{\ell},
\]
we can apply \cite[Theorem 8.6.1]{demaillyjean-pierreComplexAnalyticDifferential}
with K\"ahler metric $\omega_{\ell}$ and holomorphic Hermitian
line bundle $(K_{X^{\operatorname{reg}}}^{-1}\otimes L^{\ell},\widetilde{h})$ to obtain
an $L^{2}$ $(n,0)$-form $\widetilde{w}$ on $B$ valued in $K_{X^{\operatorname{reg}}}^{-1}\otimes L^{\ell}$
such that $\overline{\partial}\widetilde{w}=\overline{\partial}\widetilde{v}$
and 
\[
\int_{B\cap X^{\operatorname{reg}}}|w|_{h_{\ell}}^{2}\omega_{\ell}^{n}\leq C\int_{B\cap X^{\operatorname{reg}}}|w|_{\widetilde{h}}^{2}\leq C\int_{B\cap X^{\operatorname{reg}}}|\overline{\partial}\widetilde{v}|_{\omega_{\ell}\otimes\widetilde{h}}^{2}\leq C\int_{B\cap X^{\operatorname{reg}}}|\overline{\partial}v|_{\omega_{\ell}\otimes h_{\ell}}^{2}\omega_{\ell}^{n}<C\zeta
\]
where $w$ is the $L^{2}$ section of $L^{\ell}$
corresponding to $\widetilde{w}$, and we used $(iii)$. Set $u:=v-w\in H^{0}(B\cap X^{\operatorname{reg}},L^{\ell})$,
which satisfies
\[
\int_{B\cap X^{\operatorname{reg}}}|u|_{h_{\ell}}^{2}\omega_{\ell}^{n}\leq (1+C\zeta)(2\pi)^n
\]
by $(i)$. Let $\epsilon'>0$. By Lemma \ref{improvedepsreg}(i) and the estimate for the $(2n-1)$th quantitative stratum \cite{RCDquantstrata}, the set $\Sigma(\epsilon')$ of points $z\in B_{g_{\ell}}(x_0,100)$ which do not satisfy $B_{g_{\ell}}(z,\epsilon')\subseteq X^{\operatorname{reg}}$ and $\sup_{B_{g_{\ell}}(z,\epsilon')} |Rm|_{g_{\ell}} \leq (\epsilon')^{-2}$ has $g_{\ell}$-volume at most $C(B,\lambda)(\epsilon')^{\frac{1}{2}}$. In particular, 
$$\text{vol}_{g_{\ell}}(\Sigma(\epsilon')\cap B_{g_{\ell}}(x_0,100))\leq \text{vol}_{g_{\ell}}(B(z,(\epsilon')^{\frac{1}{4n}})),$$
so for any $z\in B_{g_{\ell}}(x_0,1)$, there exists $z'\in B_{g_{\ell}}(z,C(\epsilon')^{\frac{1}{4n}})\setminus \Sigma(\epsilon')$. By assumptions $(iv),(v)$, and by $\int_{B\cap X^{\operatorname{reg}}}|w|_{h_{\ell}}^2\omega_{\ell}^n \leq C\zeta$ and local elliptic regularity near $z'$, we have $|w|_{h_{\ell}}\leq \frac{\epsilon}{4}$ if $\zeta=\zeta(\epsilon',\epsilon)$ is sufficiently small. Combining with assumption (ii) yields
$$|e^{-\frac{1}{2}d_{g_{\ell}}(x_0,z')}-|u|_{h_{\ell}}^2(z')| <\frac{\epsilon}{2}$$
if $\zeta = \zeta(\epsilon',\epsilon)$ is sufficiently small. By applying \eqref{goodC1estimate} with $r=2\ell^{-2}$, we obtain
$$\sup_{B_{g_{\ell}}(x_0,2)}|\nabla^h u|_{g_{\ell}\otimes h_{\ell}} \leq C(\lambda,B).$$
Thus $|e^{-\frac{1}{2}d_{g_{\ell}}(x_0,\cdot)}-|v|_{h_{\ell}}^2|$ is $C(\lambda,B)$-Lipschitz with respect to $g_{\ell}$, yielding 
$$|e^{-\frac{1}{2}d_{g_{\ell}}(x_0,z)}-|u|_{h_{\ell}}^2(z)|\leq C(\lambda,B)(\epsilon')^{\frac{1}{4n}}+\frac{\epsilon}{2}.$$
Because $z\in B_{g_{\ell}}(x_0,1)$ was arbitrary, the remaining claim follows by choosing $\epsilon'=\epsilon'(\epsilon)>0$ small and then $\zeta=\zeta(\epsilon,\epsilon')>0$ small. 
\end{proof}

\begin{proof}[Proof of Theorem \ref{QCartierTheorem}] Let $B\subseteq X$ be a Stein neighborhood of a given point $x_0 \in X$ such that $\omega = \sqrt{-1}\partial \overline{\partial}\varphi$ for some plurisubharmonic function $\varphi$ on $B$. Then $(L,h):=(K_{X^{\operatorname{reg}}}^{-1}|_B,e^{(\lambda-1) \varphi}\omega^n)$ is a holomorphic Hermitian line bundle on $B\cap X^{\operatorname{reg}}$ with curvature $\omega$. Fix $\epsilon>0$, and let $\zeta=\zeta(\epsilon)>0$ and $D_{0}=D_{0}(\epsilon)<\infty$
be as in Proposition \ref{perturb}. 
By Lemma \ref{improvedepsreg}(ii), the singular set of any iterated tangent cone of $B$ is closed; because the singular set also has Hausdorff codimension at least 4 by Proposition \ref{codim4}, we can argue as in \cite[Section 3.2.2]{donaldsonGromovHausdorffLimitsKahler2014} to obtain
some $\ell\in\mathbb{N}^{\times}$ and $v\in C_{c}^{\infty}(B\cap X^{\operatorname{reg}},L^{\ell})$
satisfying hypotheses $(i)-(vi)$ of Proposition \ref{perturb}. If
we choose $\epsilon>0$ sufficiently small, then there exists $C<\infty$ such that the section $u\in H^{0}(B\cap X^{\operatorname{reg}},L^{\ell})$
guaranteed by Proposition \ref{perturb} then satisfies 
$C^{-1}\leq |u|_{h_{\ell}} \leq C$
on $B(x_0,1)\cap X^{\operatorname{reg}}$. In particular, $L^{\ell}$ extends to a line bundle over all of $X$, so that $L$ is $\mathbb{Q}$-Cartier.
\end{proof}

\begin{proof}[Proof of Theorem \ref{KETheorem}] 
\noindent (i) Fix $x_0 \in X$, and let $B$ be the intersection of $X$ with a Euclidean ball centered at $x_0$ with respect to a local holomorphic embedding of $X$ near $x_0$. By \ref{condition:bddpotentials}, we can assume that there exists plurisubharmonic $\varphi \in L^{\infty}(B)$ satisfying $\omega = \sqrt{-1}\partial \overline{\partial}\varphi$ on $B$.

Set $L:=K_{X^{\operatorname{reg}}}^{-1}$ and $h:=e^{(\lambda-1)\varphi}\omega^n$. For $\ell \in \mathbb{N}$ large, consider the section $u\in H^0(B\cap X^{\operatorname{reg}},L^{\ell})$ constructed in the proof of Theorem \ref{QCartierTheorem}, so that
$$C^{-1}\leq |u|_{h_{\ell}} \leq C$$
on $B_{g_{\ell}}(x_0,1)$. Let $u^{\ast} \in H^0 (B_{g_{\ell}}(x_0,1),L^{-\ell})$ be the dual section. Because $(X,\omega)$ has finite volume on bounded sets, it follows that 
$$\Omega := \left( (\sqrt{-1})^{n^2\ell}u^{\ast}\wedge \overline{u^{\ast}} \right)^{\frac{1}{\ell}} \in \mathcal{A}^{n,n}(B_{g_{\ell}}(x_0,1)\cap X^{\operatorname{reg}})$$
is an adapted volume form satisfying
\[
\int_{B_{g_{\ell}}(x_0,1)\cap X^{\operatorname{reg}}}\Omega=\int_{B_{g_{\ell}}(x_0,1)\cap X^{\operatorname{reg}}}|u|_{h_{\ell}}^{-\frac{2}{\ell}}e^{(\lambda-1)\varphi}\omega^{n}\leq C\int_{B_{g_{\ell}}(x_0,1)\cap X^{\operatorname{reg}}}\omega^{n}<\infty.
\]
By \cite[Lemma 6.4]{EGZ},
$X$ has log-terminal singularities.

(ii) For any $\ell \in \mathbb{N}$, 
$(K_{X^{\operatorname{reg}}}^{-\ell}|_{B\cap X^{\operatorname{reg}}},(e^{\lambda \varphi}\omega^{n})^{\otimes\ell})$
is a flat holomorphic line bundle on $B\cap X^{\operatorname{reg}}$. Because $K_{X^{\operatorname{reg}}}^{-\ell}$ is trivial in a neighborhood of $x_0$ for some $\ell \in \mathbb{N}^{\times}$, we can choose $B$ so that the holonomy of $(K_{X^{\operatorname{reg}}}^{-\ell}|_{B\cap X^{\operatorname{reg}}},(e^{\lambda \varphi}\omega^{n})^{\otimes\ell})$ is trivial for $\ell \in \mathbb{N}$ sufficiently large. Thus $K_{X^{\operatorname{reg}}}^{-\ell}|_{B\cap X^{\operatorname{reg}}}$
admits a parallel section $\sigma\in H^{0}(B\cap X^{\operatorname{reg}},K_{X^{\operatorname{reg}}}^{-\ell})$
for some $\ell\in\mathbb{N}$, with respect to the Hermitian metric $h$
on $K_{X^{\operatorname{reg}}}^{-\ell}$ corresponding to $(e^{\lambda \varphi}\omega^{n})^{\otimes\ell}.$
Set 
\[
v:=\left((\sqrt{-1})^{n^{2}\ell}\sigma\wedge\overline{\sigma}\right)^{-\frac{1}{\ell}}\in\mathcal{A}^{n,n}(B\cap X^{\operatorname{reg}}),
\]
so that on $B\cap X^{\operatorname{reg}}$, 
\[
\log\frac{v}{e^{\lambda \varphi}\omega^{n}}=-\frac{1}{\ell}\log|\sigma|_{h}^{2}
\]
is constant. In other words,
\[
\omega^{n}= ce^{-\lambda \varphi} v
\]
on $B\cap X^{\operatorname{reg}}$ for some $c\in(0,\infty)$. Let $j:X^{\operatorname{reg}}\hookrightarrow X$ be the inclusion map. Because $\omega$
has bounded K\"ahler potential $\varphi$ on $B$, the complex Monge-Ampere measure of $\omega$ on $B$ is well-defined, and equal to $j_\ast \omega^n$. Because $\int_{B\cap X^{\operatorname{reg}}}v<\infty$, we also know $j_{\ast}v$ is a well-defined Radon measure on $X$, and the above equality holds in the sense of Radon measures on all of $B$. 

(iii) Given (ii), this is proved in \cite[Appendix]{donaldsonGromovHausdorffLimits2017}.
\end{proof}

\section{Applications to limits of K\"ahler-Einstein Metrics and K\"ahler-Ricci flow}

In this section, we assume that $(X,d)$ is a metric cone with vertex $o\in X$.
Moreover, we assume $X$ arises as a limit in one of the following
two settings, for some $Y<\infty$:
\begin{lyxlist}{00.00.0000}
\item [{\textbf{$(A)$}}] $(M_{i}^{n},J_{i},g_{i},x_{i})$ is a sequence
of complete K\"ahler manifolds of complex dimension $n$ satisfying
$|Rc|_{g_{i}}\leq1$ and $\text{Vol}_{g_{i}}(B(x_{i},1))\geq Y^{-1}$
which converge in the pointed Gromov-Hausdorff sense to $(X,d,o)$.

\item [{\textbf{$(B)$}}] $(M_{i}^{n},J_{i},(g_{i,t})_{t\in[-T_{i},0]})$
is a sequence of compact K\"ahler-Ricci flows of complex dimension $n$, and $x_{i}\in M_{i}$
are points satisfying $\mathcal{N}_{x_{i},0}(1)\geq-Y$, such that (see \cite[Section 5.1]{bamlerCompactnessTheorySpace2023} for definitions)
\[
(M_{i},(g_{i,t})_{t\in[-T_{i},0]},(\nu_{x_{i},0;t})_{t\in[-T_{i},0]})\xrightarrow[i\to\infty]{\mathbb{F}}(\mathcal{X},(\mu_{t})_{t\in(-\infty,0]}),
\]
where $\mathcal{X}$ is a static cone modeled on $(X,d)$ in the sense of \cite[Definition 3.60]{bamlerCompactnessTheorySpace2023}, and $$d\mu_{t}=(2\pi|t|)^{-n}e^{-\frac{d^{2}(o,\cdot)}{2|t|}}\frac{1}{n!}\omega_{t}^n.$$ 
\end{lyxlist}

Given $(A)$, we let $\mathcal{R} \subseteq X$ denote the metric regular set of $X$ in the sense of \cite{cheegerStructureSpacesRicci1997}, so that there is a Ricci-flat K\"ahler cone structure $(J,g)$ on $\mathcal{R}$ such that the metric completion of $(\mathcal{R},d_g)$ is $(X,d)$. It was shown in \cite{liuGromovHausdorffLimitsAhler2019} that $X$ naturally has the
structure of a normal affine algebraic variety with $X^{\operatorname{reg}}=\mathcal{R}$ as complex manifolds, whose structure sheaf $\mathcal{O}_{X}$ consists of holomorphic functions
on $X^{\operatorname{reg}}$ which are bounded on bounded subsets of $X^{\operatorname{reg}}$. 

Given $(B)$, we let $\mathcal{R}_X \subseteq X$ denote the regular set of $X$ as in \cite[Definition 2.15]{bamlerStructureTheoryNoncollapsed2021}, so that $\mathcal{R}_X$ possesses a K\"ahler cone structure $(J,g)$ such that the metric completion of $(\mathcal{R}_X,d_g)$ is $(X,d)$ by \cite[Theorem 2.18]{bamlerStructureTheoryNoncollapsed2021}. It was shown in \cite{hallgrenKahlerRicciTangentFlows2023} that $X$ admits the structure of a normal affine algebraic variety $X$ as above, with $\mathcal{R}_X=X^{\operatorname{reg}}$. 

Given either $(A)$ or $(B)$, there are holomorphic embeddings $X\hookrightarrow\mathbb{C}^{N}$
(where $N$ can be taken to be the dimension of the Zariski tangent
cone at $o$) such that the real-holomorphic vector field $J\nabla(\frac{d^{2}(o,\cdot)}{2})$
can be identified with the restriction of the holomorphic vector field
$\xi=\sum_{\alpha=1}^{N}\sqrt{-1}w_{\alpha}z_{\alpha}\frac{\partial}{\partial z_{\alpha}}$
on $\mathbb{C}^{N}$, where $w_{\alpha}>0$ for $1\leq\alpha\leq N$.
Thus $(X,\xi)$ is naturally a polarized affine variety.

Suppose assumption $(B)$ holds, and write $r:=d(o,\cdot)$. In the proof of the next lemma, we use the following notational convention: we let $\Psi(a|b_1,...,b_{\ell})$ denote a quantity depending on parameters $a,b_1,...,b_{\ell}$, which satisfies
$$\lim_{a\to0} \Psi(a|b_1,...,b_{\ell})=0$$
for any fixed $b_1,...,b_{\ell}$.
\begin{lem}
\label{LipHarmonic} Suppose $u\in L_{\text{loc}}^{2}(X)$ satisfies
$|\nabla u|\in L_{\text{loc}}^{2}(X)$, $\Delta u=0$ on $X^{\operatorname{reg}}$ and $\mathcal{L}_{\nabla r}u=mu$
for some $m\in\mathbb{N}$. Then $u$ extends to a locally Lipschitz
function on $X$. 
\end{lem}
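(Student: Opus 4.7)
The strategy is to exploit the homogeneity $u \circ \delta_\lambda = \lambda^m u$ under cone dilations to reduce the problem to uniform $C^0$ and $C^1$ bounds on a fixed annulus around the vertex, then prove these by Moser iteration applied to the subharmonic quantities $|u|^2$ and $|\nabla u|^2$, with the singular locus handled by refined cutoffs analogous to those of Lemma \ref{cutofflemma}.

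\textbf{Step 1 (Scaling reduction).} The condition $\mathcal{L}_{\nabla r} u = m u$ means $u(\delta_\lambda p) = \lambda^m u(p)$ along cone rays, so $|\nabla u|(\delta_\lambda p) = \lambda^{m-1} |\nabla u|(p)$. Hence a bound $\sup_{A \cap X^{\operatorname{reg}}}(|u| + |\nabla u|) \leq C$ on the annulus $A = B(o,2) \setminus \overline{B(o, 1/2)}$ scales to give $|\nabla u| \leq C \lambda^{m-1}$ on $A_\lambda := B(o, 2\lambda) \setminus \overline{B(o, \lambda/2)}$. Since $m \geq 0$, this yields a local Lipschitz bound at every point of $X \setminus \{o\}$, while continuity (in fact Lipschitz-$m$ behavior) at $o$ follows from $|u| = O(r^m)$.

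\textbf{Step 2 ($C^0$ bound).} Because $\Delta u = 0$ on the Ricci-flat K\"ahler regular set $X^{\operatorname{reg}}$, the function $|u|^2$ satisfies $\Delta |u|^2 = 2 |\nabla u|^2 \geq 0$. Testing this against $\phi^2 \eta_\epsilon^2 |u|^{2(p-1)}$, where $\phi$ is a standard cutoff supported in a slightly larger annulus $\tilde A$ and $\eta_\epsilon \in C_c^\infty(X^{\operatorname{reg}})$ is a cutoff vanishing near $X \setminus X^{\operatorname{reg}}$ with $\int (|\nabla \eta_\epsilon|^{4-\sigma} + |\Delta \eta_\epsilon|^{2-\sigma}) \to 0$, the error terms on the singular set vanish as $\epsilon \to 0$. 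Combined with the hypothesis $|\nabla u| \in L^2_{\operatorname{loc}}$, this yields a reverse H\"older inequality, and Moser iteration on nested annuli gives $\sup_{A \cap X^{\operatorname{reg}}} |u| \leq C \|u\|_{L^2(\tilde A)}$.

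\textbf{Step 3 ($C^1$ bound).} On a Ricci-flat K\"ahler manifold, Bochner's formula for harmonic $u$ reads $\Delta |\nabla u|^2 = 2 |\nabla^2 u|^2 \geq 0$, so $|\nabla u|^2$ is also subharmonic on $X^{\operatorname{reg}}$. Applying the same Moser-plus-cutoff argument as in Step 2 (now using $|\nabla u|^2 \in L^1_{\operatorname{loc}}$ together with the $C^0$ bound from Step 2 to absorb lower-order terms) gives $\sup_{A \cap X^{\operatorname{reg}}} |\nabla u| \leq C$. Combined with Step 1, this shows $u|_{X^{\operatorname{reg}}}$ is locally Lipschitz with respect to the intrinsic metric of $X^{\operatorname{reg}}$.

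\textbf{Step 4 (Extension to $X$).} The uniform gradient bound on $X^{\operatorname{reg}}$, together with the fact that $X \setminus X^{\operatorname{reg}}$ has Hausdorff codimension $\geq 4$ (available in setting (A) from Cheeger--Colding--Tian and in setting (B) from Bamler's regularity theory), implies that the intrinsic length metric on $X^{\operatorname{reg}}$ agrees with the restriction of $d$. Hence $u|_{X^{\operatorname{reg}}}$ is locally Lipschitz with respect to $d$, and extends uniquely and continuously to a locally Lipschitz function on all of $X$.

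\textbf{Main obstacle.} The technical heart is Step 2, specifically the construction of cutoffs $\eta_\epsilon$ vanishing near the singular set with $L^{4-\sigma}$-small gradient. In setting (B), where the RCD structure on $X$ has not yet been established, Lemma \ref{cutofflemma} (whose proof invokes the Mondino--Naber Laplacian cutoffs) must be adapted using only volume doubling, the codimension-$\geq 4$ bound from Bamler, and the smooth K\"ahler structure on $X^{\operatorname{reg}}$. This adaptation is essentially local, since the cutoff construction only requires scale-invariant Laplacian cutoffs on balls contained in $X^{\operatorname{reg}}$, which are standard on smooth Ricci-flat K\"ahler manifolds.
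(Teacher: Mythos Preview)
Your Moser iteration scheme in Steps 2 and 3 has a genuine gap. At each step with exponent $p$, the Caccioppoli inequality for $v=|u|^{2}$ (or $|\nabla u|^{2}$) produces an error term $\int |\nabla\eta_{\epsilon}|^{2}v^{p}$. To make this vanish as $\epsilon\to 0$ you must pair it by H\"older with $\||\nabla\eta_{\epsilon}|^{2}\|_{L^{s}}$ for some $s<2$ (this is all the codimension-4 bound yields), forcing $v\in L^{ps'}$ with $s'>2$. But at that stage you only have $v\in L^{p}$, and the Sobolev jump ratio $\gamma=2^{*}/2=n/(n-1)$ in real dimension $2n\ge 4$ satisfies $\gamma<2<s'$, so the error term can never be absorbed. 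The improved Kato trick does not help: replacing $|u|$ by $|u|^{\alpha}$ still leaves the first Moser step with error $\int |\nabla\eta_{\epsilon}|^{2}|u|^{2}$, which requires $u\in L^{>4}$, not just $L^{2}$. The same obstruction blocks Step 3 even if Step 2 is granted.

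The paper circumvents this with a heat kernel representation rather than iteration. One differentiates $\int u\,\eta_{\epsilon}\phi_{r}\,K(x,\cdot,1-t)$ in $t$ and integrates by parts. The point is that for fixed $x\in X^{\operatorname{reg}}$ the support of $\nabla\eta_{\epsilon}$ lies at distance $\ge r_{0}(x)>0$ from $x$, so the Gaussian bound makes $K(x,\cdot,1-t)$ \emph{uniformly bounded} there; the error terms then reduce to products like $C(r_{0})\|\nabla u\|_{L^{2}}\|\nabla\eta_{\epsilon}\|_{L^{2}}$, which go to zero using only the $L^{2}$ hypotheses. Sending $\epsilon\to 0$ and $r\to\infty$ (the homogeneity $\mathcal{L}_{\nabla r}u=mu$ handles the $\phi_{r}$ terms) gives $u(x)=\int u(y)K(x,y,1)\,dg(y)$, after which the Gaussian gradient bound on $K$ and the polynomial growth of $\int_{B(o,r)}|u|$ immediately yield local Lipschitz continuity, with no bootstrap needed.
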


\begin{proof}
This holds even without assuming the Ricci flows $(M_i,(g_{i,t})_{t\in [-T_i,0]})$ are K\"ahler, with no added difficulty. We prove it in this generality, letting $dg$ denote the Riemannian volume measure on the regular part $\mathcal{R}$ of the cone, and letting $\nabla$ denote the Levi-Civita connection, instead of just its $(1,0)$-part. By \cite[Proof of Lemma 5.1]{hallgrenKahlerRicciTangentFlows2023}, for any $\epsilon,r>0$, there exist locally Lipschitz $\eta_{\epsilon},\phi_r :X\to [0,1]$ satisfying $\mbox{supp}(\phi_r)\subseteq B(o,2r)$, $\phi_r|_{B(o,r)} \equiv 1$,
$r|\nabla \phi|+r^2|\Delta \phi|\leq C$, $\mbox{supp}(\eta_{\epsilon})\cap B(o,r) \subset \subset \mathcal{R}$, 
$$\lim_{\epsilon \searrow 0} \int_{\mathcal{R}\cap B(o,r)} |\nabla \eta_{\epsilon}|^{\frac{7}{2}} dg=0,$$
and such that for any compact subset $\mathcal{K}\subseteq \mathcal{R}$, we have $\eta_{\epsilon}|_{\mathcal{K}}\equiv1$ for sufficiently small $\epsilon =\epsilon(\mathcal{K})>0$. By \cite[Appendix]{hallgrenKahlerRicciTangentFlows2023}, there is a function $K$ satisfying the conclusions of Lemma \ref{heatkernel}, such that the constant $C(\mathcal{K})$ appearing in assertions (v),(vi) of this lemma can be taken independent of the compact set $\mathcal{K}$. Because $\operatorname{supp}(\eta_{\epsilon}\phi_{r}u)\in C_{c}^{\infty}(\mathcal{R})$,
we can integrate by parts to obtain
\begin{align*}
\frac{d}{dt}\int_{X}u(y)\eta_{\epsilon}(y)\phi_{r}(y)K(x,y,1-t)dg(y) \hspace{-50 mm} &\\
=&-2\int_{X}\langle\nabla u,\eta_{\epsilon}\nabla\phi_{r}+\phi_{r}\nabla\eta_{\epsilon}\rangle(y)K(x,y,1-t)dg(y)\\
 & -\int_{X}u(y)\left(2\langle\nabla\eta_{\epsilon}(y),\nabla\phi_{r}(y)\rangle+\eta_{\epsilon}(y)\Delta\phi_{r}(y)\right)K(x,y,1-t)dg(y)\\
 & -\int_{X}u(y)\phi_{r}(y)\Delta\eta_{\epsilon}(y)K(x,y,1-t)dg(y).
\end{align*}
More integration by parts gives
\begin{align*}
-\int_{X}u(y)\phi_{r}(y)&\Delta\eta_{\epsilon}(y)K(x,y,r^2-t)dg(y) \\=&\int_{X}\langle\nabla\eta_{\epsilon},u\nabla\phi_{r}+\phi_{r}\nabla u\rangle(y)K(x,y,1-t)dg(y)\\
&+ \int_X u(y)\phi_{r}(y)\langle \nabla \eta_{\epsilon},\nabla K(x,\cdot,r^2-t)\rangle(y)dg(y)
\end{align*}
and also
\begin{align*}
-\int_{X}\langle\nabla u,\eta_{\epsilon}\nabla\phi_{r}\rangle(y)&K(x,y,1-t)dg(y)
\\=&\int_{X}u(y)\left(\eta_{\epsilon}\Delta\phi_{r}+\langle\nabla\eta_{\epsilon},\nabla\phi_{r}\rangle\right)(y)K(x,y,1-t)dg(y)\\ & + \int_{X} u(y) \eta_{\epsilon}(y)\langle\nabla\phi_{r},\nabla K(x,\cdot,1-t)\rangle(y) dg(y)
\end{align*}
Now assume $x\in B(o,\frac{r}{2})\cap \mathcal{R}$, so that
there exists $r_0=r_0(x)>0$ such that $d(x,\operatorname{supp}(1-\eta_{\epsilon}))\geq r_0$
for all $\epsilon>0$ sufficiently small. Then we can estimate
\begin{align*}
\left|\int_{X}\langle\nabla u,\phi_{r}\nabla\eta_{\epsilon}\rangle(y)K(x,y,1-t)dg(y)\right| \hspace{-40 mm} &
\\ \leq & C\frac{1}{(1-t)^{n}}\exp\left(-\frac{r_{0}^{2}}{C(1-t)}\right)\left(\int_{B(o,2r)\cap \mathcal{R}}|\nabla u|^{2}dg\right)^{\frac{1}{2}}\left(\int_{B(o,2r)\cap \mathcal{R}}|\nabla\eta_{\epsilon}|^{2}dg\right)^{\frac{1}{2}}\\
\leq & C(r_{0},r)\left(\int_{B(o,2r)\cap \mathcal{R}}|\nabla u|^{2}dg\right)^{\frac{1}{2}}\left(\int_{B(o,2r)\cap \mathcal{R}}|\nabla\eta_{\epsilon}|^{2}(y)dg(y)\right)^{\frac{1}{2}}\\
\leq & \Psi(\epsilon|r_{0},r),
\end{align*}
and similarly
\[
\left|\int_{X}u(y)\langle\nabla\eta_{\epsilon},\nabla\phi_{r}\rangle(y)K(x,y,1-t)dg(y)\right|\leq\Psi(\epsilon|r_{0},r),
\]
\[
\left|\int_{X}\langle\nabla\eta_{\epsilon},u\phi_{r}\nabla K(x,\cdot,1-t)\rangle(y)dg(y)\right|\leq\Psi(\epsilon|r_{0},r),
\]
For $r\geq1$, we use $\mathcal{L}_{\nabla r}u=mu$ to estimate
\begin{align*}
\left|\int_{X}u(y)\eta_{\epsilon}(y)\Delta\phi_{r}(y)K(x,y,1-t)dg(y)\right|\leq & \frac{C}{r^{2}}\frac{r^{2n}}{(1-t)^{n}}\exp\left(-\frac{r^{2}}{C(1-t)}\right)\frac{1}{r^{2n}}\int_{B(o,2r)\cap \mathcal{R}}|u|dg\\
\leq & Cr^{m-2}\exp\left(-\frac{r^{2}}{C}\right)\\
= & \Psi(r^{-1}).
\end{align*}
Similarly, we have 
\[
\left|\int_{X}u(y)\eta_{\epsilon}(y)\langle\nabla\phi_{r}(y),\nabla K(x,y,1-t)\rangle dg(y)\right|\leq\Psi(r^{-1}).
\]
Combining expressions, we have
\[
\left|\frac{d}{dt}\int_{X}u(y)\eta_{\epsilon}(y)\phi_{r}(y)K(x,y,1-t)dg(y)\right|\leq\Psi(r^{-1})+\Psi(\epsilon|r_{0},r),
\]
so integrating from $t=0$ to $t=1$ gives
\[
\left|u(x)-\int_{X}u(y)\eta_{\epsilon}(y)\phi_{r}(y)K(x,y,1)dg(y)\right|\leq\Psi(r^{-1})+\Psi(\epsilon|r_{0},r).
\]
Because $uK(x,\cdot,1)\in L^{1}$, we can take $\epsilon\searrow0$
and then $r\to\infty$, appealing to the dominated convergence theorem
to obtain
\[
u(x)=\int_{X}u(y)K(x,y,1)dg(y).
\]
Because $r\mapsto\int_{B(o,r)}|u|dg(y)$ has polynomial growth, we
can use the Gaussian estimates for $K$ and $|\nabla K|$ to conclude
that $|\nabla u|$ is locally bounded. 
\end{proof}

\begin{prop} \label{coneisRCD}
If $X=C(Z)$ is a cone satisfying assumption $(B)$, then $(C(Z),d,\mathcal{H}^{2n})$
satisfies the $RCD(0,2n)$ condition.
\end{prop}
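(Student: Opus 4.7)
The approach is to verify $RCD(0,2n)$ via the equivalent Bakry-\'Emery characterization of Ambrosio-Gigli-Savar\'e and Erbar-Kuwada-Sturm: infinitesimal Hilbertianity of $(C(Z),d,\mathcal{H}^{2n})$ combined with the dimensional Bochner inequality $BE(0,2n)$. The starting point is that the static cone already carries a well-behaved heat kernel, furnished by the appendix of \cite{hallgrenKahlerRicciTangentFlows2023}: the conclusions of Lemma \ref{heatkernel} hold on $C(Z)$ with constants independent of the compact set $\mathcal{K}$, producing a strongly continuous contraction semigroup $P_t$ on $L^2(C(Z),\mathcal{H}^{2n})$ whose generator is the Friedrichs extension of the geometric Laplacian from $C_c^{\infty}(X^{\operatorname{reg}})$ and whose Dirichlet form is the quadratic form $\mathcal{E}(f) = \int_{X^{\operatorname{reg}}} |\nabla f|^2 \, d\mathcal{H}^{2n}$.

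For infinitesimal Hilbertianity, I would identify $\mathcal{E}$ with the Cheeger energy of $(C(Z),d,\mathcal{H}^{2n})$, using the codimension-$4$ bound on the singular set from Bamler's structure theory \cite{bamlerStructureTheoryNoncollapsed2021} together with cutoffs analogous to those of Lemma \ref{cutofflemma} (applied here on the cone, with $y_0=o$) to approximate arbitrary $W^{1,2}$ functions by smooth functions compactly supported in $X^{\operatorname{reg}}$. For the Bochner inequality, the static-flow hypothesis forces $\operatorname{Rc}(g)=0$ on $X^{\operatorname{reg}}$, so the classical Bochner formula and Cauchy-Schwarz give the pointwise bound
\[
\tfrac{1}{2}\Delta |\nabla f|^2 \geq \tfrac{(\Delta f)^2}{2n} + \langle \nabla f, \nabla\Delta f\rangle
\]
on $X^{\operatorname{reg}}$ for any smooth $f$. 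To upgrade this to the weak $BE(0,2n)$ inequality applied to $f = P_t h$ with $h \in L^2 \cap L^\infty$, the plan is to pair the pointwise inequality against $\phi_r \eta_{\epsilon}^2 \psi$ for nonnegative test $\psi\in C_c^{\infty}(X^{\operatorname{reg}})$ and cutoffs $\phi_r,\eta_\epsilon$ as in the previous section, integrate by parts on $X^{\operatorname{reg}}$, and send $\epsilon \searrow 0$ to kill the boundary contributions concentrated near the singular set. Combined with infinitesimal Hilbertianity, the Ambrosio-Gigli-Savar\'e equivalence then delivers $RCD(0,2n)$.

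The main obstacle is this last step. Lemma \ref{cutofflemma} provides $\int (|\nabla\eta_\epsilon|^{4-\sigma}+|\Delta \eta_\epsilon|^{2-\sigma}) \,d\mathcal{H}^{2n}\to 0$, while the integration-by-parts expressions involve terms like $|\nabla \eta_\epsilon|^2 \, |\nabla P_t h|^2$ and $|\Delta\eta_\epsilon| \, |\nabla P_t h|^2$. Eliminating these via H\"older requires local $L^p$ control on $|\nabla P_t h|$ (and on $|\nabla^2 P_t h|$ for the dimensional right-hand side) for $p$ slightly exceeding $4$ and $2$ respectively. Such bounds should follow from interior parabolic regularity on the smooth Ricci-flat manifold $(X^{\operatorname{reg}},g)$ combined with the Gaussian estimates on $K$ from Lemma \ref{heatkernel}, in the spirit of Lemma \ref{C1estimate}; the endpoint exponents are borderline but match precisely the margin afforded by Lemma \ref{cutofflemma}. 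One must also verify that the distributional $\Delta P_t h$ on $C(Z)$ agrees with its pointwise value on $X^{\operatorname{reg}}$, which again uses the codimension-$4$ smallness of the singular set to show that no singular contributions arise.
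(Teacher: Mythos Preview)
Your approach is workable in principle but takes a genuinely different route from the paper. The paper does \emph{not} verify Bochner on $C(Z)$ directly. Instead it invokes Ketterer's theorem to reduce $RCD(0,2n)$ for $C(Z)$ to $RCD(2n-1,2n)$ for the cross-section $Z$, and then appeals to Honda's ``almost smooth'' characterization \cite[Corollary 3.10]{hondaBakryEmeryConditions2018}: since $C(Z)^{\operatorname{reg}}$ is Ricci-flat, the link $Z^{\operatorname{reg}}$ has $Rc_Z=(2n-1)g_Z$, and $Z$ inherits a Sobolev inequality from $C(Z)$; the only nontrivial hypothesis left is that every $\Delta_Z$-eigenfunction $v$ is Lipschitz. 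This is where Lemma~\ref{LipHarmonic} enters: the radial extension $u(r,y)=r^{\alpha}v(y)$ is harmonic and homogeneous on $C(Z)$, and Lemma~\ref{LipHarmonic} (proved via the global heat kernel from the appendix of \cite{hallgrenKahlerRicciTangentFlows2023}) gives $u$ locally Lipschitz, hence $v$ Lipschitz. So the entire analytic content is compressed into one Lipschitz estimate for homogeneous harmonic functions; the Bochner inequality is never written down.

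Your direct $BE(0,2n)$ verification is a reasonable alternative, but the obstacle you flag is largely illusory. The heat-kernel gradient bound of Lemma~\ref{heatkernel}(vi), with constants independent of $\mathcal{K}$ in this setting, already gives $|\nabla P_t h|\in L^{\infty}_{\operatorname{loc}}$ for bounded $h$, and iterating via $\Delta P_t h = P_{t/2}(\Delta P_{t/2}h)$ yields $|\nabla\Delta P_t h|\in L^{\infty}_{\operatorname{loc}}$ as well. With these $L^{\infty}$ bounds the error terms $\int |\Delta\eta_\epsilon|\,|\nabla f|^2$ and $\int |\nabla\eta_\epsilon|^2|\nabla f|^2$ vanish using only the $L^1$ smallness of $\Delta\eta_\epsilon$ and $|\nabla\eta_\epsilon|^2$, and no Hessian bound on $P_t h$ is needed: one applies $|\nabla^2 f|^2\geq (\Delta f)^2/(2n)$ pointwise on $X^{\operatorname{reg}}$ \emph{before} integrating, so the dimensional term never requires controlling $|\nabla^2 P_t h|$. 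What does require care in your scheme, and what you gloss over, is the identification of the Cheeger energy with the Dirichlet integral and the Sobolev-to-Lipschitz property---both are needed for the Ambrosio--Gigli--Savar\'e equivalence and neither is automatic for $\mathbb{F}$-limits. The paper sidesteps this by pushing everything to $Z$ and quoting Honda's criterion, which packages these issues into checkable hypotheses.
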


\begin{proof}
By \cite[Theorem 1.2]{Ketterer}, this is equivalent to showing that $Z$ satisfies the
$RCD(2n-1,2n)$ condition. We appeal to Honda's characterization \cite[Corollary 3.10]{hondaBakryEmeryConditions2018}, using the fact that $C(Z)^{\operatorname{reg}}$ is
Ricci-flat, and that $C(Z)$ satisfies the Sobolev to Lipschitz property.
Moreover, $C(Z)$ satisfies a Sobolev inequality by  \cite[Corollary 1.9]{chan2024noncollapsedmathbbflimitmetricsolitons}, so that because $\overline{B}(o,2)\setminus\overline{B}(o,\frac{1}{2})$
is quasi-isometric to the metric product $Z\times[\frac{1}{2},2]$,
$Z$ also satisfies a Sobolev inequality, hence also satisfies the
$L^{2}$-strong compactness condition. It remains to show that, given
any $v\in W^{1,2}(Z)$ satisfying $\Delta_{Z}v=-\lambda v$ for some
$\lambda\in[0,\infty)$, $v$ is locally Lipschitz. Because the function
$u:C(Z)\to\mathbb{R}$ defined by $u(r,y):=r^{\alpha}v(y)$ satisfies
\begin{align*}
\Delta u= & \left(\frac{\partial^{2}}{\partial r^{2}}+\frac{2n-1}{r}\frac{\partial}{\partial r}+\frac{1}{r^{2}}\Delta_{Z}\right)(r^{\alpha}v)\\
= & \left(\alpha(\alpha-1)+(2n-1)\alpha-\lambda\right)(r^{\alpha-2}v),
\end{align*}
if we choose
\[
\alpha:=\frac{1}{2}\left(-(2n-2)+\sqrt{(2n-2)^{2}+4\lambda}\right)>0,
\]
it follows that $u\in W_{\text{loc}}^{1,2}(C(Z))$ and $\Delta u=0$, $\mathcal{L}_{\nabla r}u=\alpha u$ on $C(Z)^{\operatorname{reg}}$. We can therefore apply Lemma \ref{LipHarmonic} to conclude that $u$
is locally Lipschitz, hence $v$ is Lipschitz. 
\end{proof}

We now restate a more precise version of Theorem \ref{applicationtoRF}. 

\begin{thm} If $(X,d)$ satisfies one of assumptions $(A),(B)$, then $X$ is a rough K\"ahler-Einstein variety.
\end{thm}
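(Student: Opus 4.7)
The plan is to check each of the five conditions \ref{condition:einstein}--\ref{condition:epsregularity} in Definition \ref{rough} for a cone $(X,d)$ arising from hypothesis $(A)$ or $(B)$. The setup already provides that $X$ is a normal affine algebraic variety whose regular locus $X^{\operatorname{reg}}$ coincides with the metric regular set and carries a smooth Ricci-flat K\"ahler cone structure $(J,g)$, by \cite{liuGromovHausdorffLimitsAhler2019} in case $(A)$ and \cite{hallgrenKahlerRicciTangentFlows2023} in case $(B)$. Thus $(X,\omega)$ is already a normal K\"ahler variety with smooth K\"ahler metric on $X^{\operatorname{reg}}$, and only the five axioms remain.

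Condition \ref{condition:einstein} with $\lambda = 0$ is immediate from Ricci-flatness. For \ref{condition:bddpotentials}, on $X^{\operatorname{reg}}$ the K\"ahler cone has the global potential $\omega = \ddbar(r^{2}/4)$, and since $r = d(o,\cdot)$ extends continuously to $X$, this potential is locally bounded. For \ref{condition:domination}, I will use the polarized embedding $X \hookrightarrow \mathbb{C}^{N}$ in which the Reeb field is identified with the weighted Euler field $\xi = \sum_{\alpha} \sqrt{-1}\,w_{\alpha} z_{\alpha} \partial_{z_{\alpha}}$ with $w_{\alpha} > 0$. The pullback $\omega_{\mathrm{sm}}$ of the standard K\"ahler form on $\mathbb{C}^{N}$ is a globally defined smooth K\"ahler form on $X$, and a direct comparison using the $\mathbb{C}^{\ast}$-action generated by $\xi$, together with $\omega = \tfrac{1}{2} d J^{\ast} d(r^{2}/2)$, shows that on every bounded subset of $X^{\operatorname{reg}}$ one has $\omega \geq c\, \omega_{\mathrm{sm}}$ for some $c > 0$ depending on the bound.

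The essential content is the RCD condition \ref{condition:metricompletion}. In case $(B)$ this is exactly Proposition \ref{coneisRCD} just proved. In case $(A)$, the noncollapsed pointed Gromov-Hausdorff limit $(X,d)$ is an $RCD(-1,2n)$ space by the stability results of De Philippis-Gigli~\cite{noncollapsedRCD}; since $X$ is a metric cone whose smooth part is Ricci-flat, the same cross-sectional route used in Proposition \ref{coneisRCD} upgrades this to $RCD(0,2n)$. Explicitly, Ketterer~\cite{Ketterer} reduces the claim to the $RCD(2n-1,2n)$ condition on the cross-section $Z$, Honda's characterization~\cite{hondaBakryEmeryConditions2018} then reduces to Lipschitz regularity of eigenfunctions on $Z$, and Lemma \ref{LipHarmonic} supplies the required regularity (its proof, as noted in the excerpt, applies without the K\"ahler-Ricci flow structure). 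Finally, \ref{condition:epsregularity} follows from Anderson's $\epsilon$-regularity~\cite{andersoneps} in case $(A)$, combined with the identification $X^{\operatorname{reg}} = \mathcal{R}$, and from the analogous Ricci-flow $\epsilon$-regularity of Bamler~\cite{bamlerStructureTheoryNoncollapsed2021} in case $(B)$.

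The main obstacle is verifying the sharp $RCD(0,2n)$ condition in case $(A)$ without circularity: a priori Gromov-Hausdorff convergence only yields $RCD(-1,2n)$, and the cone upgrade requires checking that the ingredients used in Proposition \ref{coneisRCD} (Sobolev inequality on $Z$ coming from one on $X$, the Sobolev-to-Lipschitz property, and homogeneous Lipschitz extension via Lemma \ref{LipHarmonic}) all survive in the Einstein setting rather than only the Ricci-flow setting. Assuming each survives as expected, the five axioms are all in hand and the theorem follows.
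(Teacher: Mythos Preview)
Your overall plan---verify each of the five axioms---matches the paper, and conditions \ref{condition:einstein}, \ref{condition:bddpotentials} are handled identically. The deviations occur in the remaining three, and two of them create real difficulties.

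For \ref{condition:domination}, the paper's argument is much more direct than your $\mathbb{C}^{*}$-scaling sketch: the holomorphic embedding $F:X\hookrightarrow\mathbb{C}^{N}$ is by locally \emph{Lipschitz} functions (this is part of what \cite{liuGromovHausdorffLimitsAhler2019} and \cite{hallgrenKahlerRicciTangentFlows2023} establish), so $\text{tr}_{\omega}(F^{\ast}\omega_{\mathbb{C}^{N}})=|dF|_{\omega,\omega_{\mathbb{C}^{N}}}^{2}$ is locally bounded, which is exactly $\omega\geq c\,F^{\ast}\omega_{\mathbb{C}^{N}}$. Your scaling argument is problematic because $\omega$ and $F^{\ast}\omega_{\mathbb{C}^{N}}$ do not scale the same way under the weighted action generated by $\xi$ unless all $w_{\alpha}$ coincide.

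For \ref{condition:metricompletion} in case $(A)$, the paper simply cites stability of the RCD condition under pointed Gromov--Hausdorff convergence. You are right that stability only gives $RCD(-1,2n)$ a priori, but the upgrade to $RCD(0,2n)$ is a one-line consequence of the cone structure: $(X,d)$ is isometric to $(X,cd)$ for every $c>0$, hence is $RCD(-c^{-2},2n)$ for all $c$, hence $RCD(0,2n)$. Rerunning the Ketterer--Honda--Lemma~\ref{LipHarmonic} machinery is unnecessary and, as you yourself note, risks circularity: the proof of Lemma~\ref{LipHarmonic} as written draws its heat-kernel and cutoff estimates from Ricci-flow-specific sources (\cite[Appendix, Lemma~5.1]{hallgrenKahlerRicciTangentFlows2023}), and replacing these in case $(A)$ would require exactly the RCD structure you are trying to establish.

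For \ref{condition:epsregularity} in case $(B)$, ``the analogous Ricci-flow $\epsilon$-regularity of Bamler'' hides a non-trivial translation step. Bamler's $\epsilon$-regularity is phrased in terms of entropy, not volume; the paper passes from the volume hypothesis $\mathcal{H}^{2n}(B(x,r))\geq(\omega_{2n}-\epsilon)r^{2n}$ to a tangent cone $Z$ at $x$, uses volume monotonicity (available now that $RCD(0,2n)$ is established) to bound the cross-sectional volume of $Z$, converts this to an entropy bound via \cite[Theorem~15.80]{bamlerStructureTheoryNoncollapsed2021}, and only then invokes \cite[Theorem~2.11]{bamlerStructureTheoryNoncollapsed2021}.
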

\begin{proof} Clearly property \ref{condition:einstein} of Definition \ref{rough} holds in either case. Given either (A) or (B), $X$ admits a holomorphic embedding $F:X\to \mathbb{C}^N$ by locally Lipschitz functions, so that $\text{tr}_{\omega}(F^{\ast} \omega_{\mathbb{C}^N}) = |dF|_{\omega,\omega_{\mathbb{C}^N}}^2$ implies Definition \ref{rough} \ref{condition:domination}. Because $\frac{1}{2}r^2$ is a locally bounded K\"ahler potential for $\omega$, it follows that $X$ satisfies property \ref{condition:bddpotentials} of Definition \ref{rough}. Given assumption (A), property \ref{condition:metricompletion} of Definition \ref{rough} follows from the fact that the RCD condition is stable under pointed Gromov-Hausdorff limits \cite[Theorem 2.7]{RCDstability}. Given assumption (B), this instead follows from Proposition \ref{coneisRCD}. 

Given assumption (A), property \ref{condition:epsregularity} follows from \cite[Theorem 3.2 and Remark 3.3]{andersoneps}. It remains to prove that property \ref{condition:epsregularity} holds under assumption (B), so for $\epsilon =\epsilon(Y)>0$ to be determined, assume that $x\in X$ and $r\in (0,\epsilon]$ satisfy $\mathcal{H}^{2n}(B(x,r))\geq (\omega_{2n}-\epsilon)r^{2n}$. Let $Z$ be any tangent cone of $X$ based at $x$. Because $X$ is an $RCD(0,2n)$ space, volume monotonicity then implies that $Z$ is metric cone with vertex $o_Z$ satisfying $\mathcal{H}^{2n-1}(\partial B(o_Z,1))\geq \mathcal{H}^{2n-1}(\mathbb{S}^{2n-1})-C(n)\epsilon$. By \cite[Theorem 15.80]{bamlerStructureTheoryNoncollapsed2021}, it follows that the entropy $W_{\infty}$ of $Z$ considered as a singular soliton satisfies 
$$W_{\infty} = \log \left( \frac{\mathcal{H}^{2n-1}(\partial B(o_Z,1))}{\mathcal{H}^{2n-1}(\mathbb{S}^{2n-1})} \right) \geq -C(n)\epsilon.$$
If $\epsilon =\epsilon(n)$ is sufficiently small, then \cite[Theorem 2.11]{bamlerStructureTheoryNoncollapsed2021} gives $x\in \mathcal{R}$. 
\end{proof}

\bibliographystyle{amsalpha}
\bibliography{klt}

\end{document}